\theoremstyle{plain}
\newtheorem{theorem}{Theorem}[section]
\newtheorem{lemma}[theorem]{Lemma}
\theoremstyle{definition}
\newtheorem{example}[theorem]{Example}
\def\UU{{\mathcal U}}
\def\FF{{\mathcal F}}
\begin{document}


\begin{center}
{\Large \bf     On the partial $ \Pi  $-property of second minimal or second maximal subgroups of Sylow subgroups of finite groups

\renewcommand{\thefootnote}{\fnsymbol{footnote}}

\footnotetext[1]
{Corresponding author.}

}\end{center}

                        \vskip0.6cm
\begin{center}

                       Zhengtian Qiu, Jianjun Liu and Guiyun Chen$^{\ast}$

                            \vskip0.5cm

       School of Mathematics and Statistics, Southwest University

       Chongqing 400715, P. R. China

E-mail addresses:  qztqzt506@163.com \, \  liujj198123@163.com   \, \ gychen1963@163.com

\end{center}

                          \vskip0.5cm

\begin{abstract}
	Let $ H $ be a subgroup of a finite group $ G $.  We say that $ H $ satisfies the partial $ \Pi  $-property in $ G $ if if there exists a chief series $ \varGamma_{G}: 1 =G_{0} < G_{1} < \cdot\cdot\cdot < G_{n}= G $ of $ G $ such that for every $ G $-chief factor $ G_{i}/G_{i-1}$ $ (1\leq i\leq n) $ of $ \varGamma_{G} $, $ | G / G_{i-1} : N _{G/G_{i-1}} (HG_{i-1}/G_{i-1}\cap G_{i}/G_{i-1})| $ is a $ \pi (HG_{i-1}/G_{i-1}\cap G_{i}/G_{i-1}) $-number.  In this paper, we study the influence of some   second minimal or second maximal subgroups of a Sylow subgroup satisfying  the partial $ \Pi  $-property  on the structure of a finite group.\\
	
	\small \textbf{Keywords:} Finite group, $ p $-soluble group, the partial  $ \Pi $-property, $ p $-length.\\
	
	\small \textbf{Mathematics Subject Classification (2020):} 20D10, 20D20.
\end{abstract}

	


\section{Introduction}

    All groups considered in this paper are finite. We use conventional notions as in \cite{Huppert-1967}. $ G $ always denotes a finite group, $ p $ denotes a fixed prime, $ \pi $ denotes some set of primes  and $ \pi(G) $ denotes the set of all primes dividing $ |G| $. An integer $ n $ is called a $ \pi $-number if all prime divisors of $ n $ belong to $ \pi $. In particular, an integer $ n $ is called a $ p $-number if it is a power of $ p $.

    Recall that a class $ \FF $  of groups is called a formation if $ \FF $ is closed under taking homomorphic images and subdirect products.  A formation $ \FF $ is said to be saturated if $ G/\Phi(G) \in \FF $  implies that $ G \in \FF $.  Throughout  this paper, we  use $ \UU $ (resp. $ \UU_{p} $) to denote the class of supersoluble (resp. $ p $-supersoluble) groups.

    Let $ \FF $ be a formation. The $ \FF $-residual of $ G $, denoted by $ G^{\FF} $, is the smallest normal subgroup of $ G $
    with quotient in $ \FF $. A chief factor $ L/K $ of $ G $ is said to be $ \FF $-central  (resp. $ \FF $-eccentric) in $ G $ if $ L/K \rtimes G/C_{G}(L/K) \in \FF $ (resp. $ L/K \rtimes G/C_{G}(L/K) \not \in \FF $). In particular, a chief factor $ L/K $ of  $ G $ is $ \UU $-central in $ G $  if and only if $ L/K $ is cyclic.  A normal subgroup $ N $ of $ G $ is called $ \FF $-hypercentral in $ G $ if either $ N = 1 $ or every $ G $-chief factor below $ N $ is
    $ \FF $-central in $ G $. Let $ Z_{\FF}(G) $ denotes the $ \FF $-hypercenter of $ G $, that is, the product of all $ \FF $-hypercentral normal subgroups of $ G $. Recall that  a group $ H $ is said to be quasisimple if $ H/Z(H) $ is simple and $ H $ is perfect. A subnormal quasisimple subgroup of $ G $ is
    called a component of $ G $.  Let $ U $ and $ V $ be distinct components of $ G $. Then $ [U,V] = 1 $.  The components of $ G $ normalize each other, and so each component is normal in the subgroup generated by all of them. This subgroup, denoted $ E(G) $, is called the layer of $ G $. The product $ F(G)E(G) $ is the generalised Fitting subgroup of $ G $. It is usually denoted by $ F^{*}(G) $ (for details, see \cite[Chapter 9]{isaacs2008finite}).

    In \cite{Chen-2013}, Chen and Guo introduced the concept of the partial  $ \Pi $-property of subgroups of finite groups,  which generalizes a large number of known embedding
    properties (see \cite[Section 7]{Chen-2013}). Let $ H $ be a subgroup of  $ G $. We say that $ H $ satisfies the partial $ \Pi  $-property in $ G $ if if there exists a chief series $ \varGamma_{G}: 1 =G_{0} < G_{1} < \cdot\cdot\cdot < G_{n}= G $ of $ G $ such that for every $ G $-chief factor $ G_{i}/G_{i-1} $ $ (1\leq i\leq n) $ of $ \varGamma_{G} $, $ | G / G_{i-1} : N _{G/G_{i-1}} (HG_{i-1}/G_{i-1}\cap G_{i}/G_{i-1})| $ is a $ \pi (HG_{i-1}/G_{i-1}\cap G_{i}/G_{i-1}) $-number. They proved the following results by assuming  some maximal or minimal subgroups of  a Sylow subgroup  satisfiy the partial $ \Pi $-property.

    \begin{theorem}[{\cite[Proposition 1.4]{Chen-2013}}]\label{maximal}
     Let $ E $ be a normal subgroup of $ G $ and let $ P $ be a Sylow $ p $-subgroup of $ E $. If every maximal subgroup of $ P $ satisfies the partial $ \Pi $-property in $ G $, then either $ E\leq Z_{\UU_{p}}(G) $ or $ |E|_{p}=p $.
    \end{theorem}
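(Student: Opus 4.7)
The plan is a minimal counterexample argument. Assume $(G,E)$ is a counterexample minimising $|G|+|E|$, so $|E|_p>p$ and $E\not\leq Z_{\UU_p}(G)$. I first record two preliminary properties of the partial $\Pi$-property, of the sort usually established as technical lemmas in this area: the defining index condition on a subgroup $H$ holds with respect to every $G$-chief factor $L/T$, not merely those appearing in the fixed chief series; and the property descends to quotients, so that for any $K\trianglelefteq G$ with $K\leq E$, each maximal subgroup of the Sylow $p$-subgroup $PK/K$ of $E/K$ still satisfies the partial $\Pi$-property in $G/K$. These make the induction freely available on any quotient $(G/K,E/K)$.

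Pick a minimal normal subgroup $N$ of $G$ contained in $E$ and split on its type. If $N$ is a $p'$-group, the quotient $(G/N,E/N)$ lies within the inductive scope: either $E/N\leq Z_{\UU_p}(G/N)$---whence $E\leq Z_{\UU_p}(G)$ since $p'$-groups are automatically $\UU_p$-central, contradicting the counterexample---or $|E/N|_p=p=|E|_p$, again a contradiction. Hence $N$ must be an elementary abelian $p$-group, and the whole argument reduces to proving $|N|=p$. Once this is obtained, $N\leq Z_{\UU_p}(G)$, induction on $(G/N,E/N)$ yields $E/N\leq Z_{\UU_p}(G/N)$ (the alternative $|E/N|_p=p$ gives $|E|_p=p^2$ and is dispatched by analysing the two possible $p$-related $G$-chief factors of $E$ directly, using the partial $\Pi$-property of the order-$p$ subgroups of $P$ to rule out non-abelian chief factors), and combining gives $E\leq Z_{\UU_p}(G)$, the final contradiction.

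To prove $|N|=p$, argue by contradiction assuming $|N|=p^n$ with $n\geq 2$; then $G/C_G(N)$ acts irreducibly on $N$, which thus admits no proper $G$-invariant subgroup. Applying the first preliminary fact to the chief factor $N=N/1$, every maximal subgroup $P_1$ of $P$ satisfies that $|G:N_G(P_1\cap N)|$ is a $p$-number. In the subcase $N\not\leq\Phi(P)$, I select maximal subgroups $P_1$ of $P$ with $N\not\leq P_1$, so $P_1\cap N$ is a hyperplane of $N$ with $G$-orbit of $p$-power size; observing that every $P$-invariant hyperplane $H$ of $N$ containing $N\cap\Phi(P)$ is realised as $P_1\cap N$ for some such $P_1$ (because $H\supseteq N\cap\Phi(P)$ ensures $N/H\not\leq\Phi(P/H)$) and recalling that $N$ has $(p^n-1)/(p-1)\equiv 1\pmod p$ hyperplanes while irreducibility rules out singleton $G$-orbits, one extracts a congruence contradiction.

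The main obstacle is the residual subcase $N\leq\Phi(P)$: every maximal subgroup of $P$ then contains $N$, and the direct condition conveys no information about $N$. Here I would further reduce---eliminating any nontrivial $p'$-core of $E$ and any components of $E$ by additional induction steps---to the situation where $E$ is itself a $p$-group, so that $P=E=O_p(G)\trianglelefteq G$ and $\Phi(P)$ is characteristic in $E$, hence normal in $G$. In this reduced setting $N\leq\Phi(P)\trianglelefteq G$, and I would apply the partial $\Pi$-property of well-chosen maximal subgroups of $P$ against a refined chief factor of $G$ lying just above $N$ inside $\Phi(P)$; the delicate step, which I expect to be the hardest part of the proof, is to show that this refined application forces enough $G$-structure on $N$ to contradict $|N|>p$ and thereby complete the minimal counterexample argument.
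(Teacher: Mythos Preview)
The paper does not contain a proof of this statement: Theorem~\ref{maximal} is quoted verbatim from \cite[Proposition~1.4]{Chen-2013} and used as a black box (for instance inside the proof of Lemma~\ref{isomorphic} and in Case~2 of Theorem~\ref{classification}). There is therefore nothing in the present paper to compare your argument against.

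That said, your proposal has a genuine gap at its very foundation. Your first ``preliminary property''---that the index condition holds for \emph{every} $G$-chief factor $L/T$---is precisely what distinguishes the full $\Pi$-property from the \emph{partial} $\Pi$-property. By definition the partial version only guarantees the existence of \emph{one} chief series along which the condition holds; it does not follow that the condition holds for an arbitrary chief factor such as your chosen minimal normal subgroup $N$. The paper's Lemma~\ref{pass} is the correct substitute: it lets you arrange the witnessing chief series to pass through a prescribed normal subgroup \emph{containing} $H$, but your $H=P_1$ is a maximal subgroup of $P$ and need not lie inside $N$, so Lemma~\ref{pass} does not give you the chief factor $N/1$ either. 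Your hyperplane-counting argument in the case $N\not\leq\Phi(P)$ therefore rests on an unjustified premise.

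Beyond this, the proposal is openly unfinished: the case $N\leq\Phi(P)$ is only sketched (``I would apply\ldots'', ``I expect to be the hardest part''), and the dispatch of the subcase $|E/N|_p=p$ invokes ``the partial $\Pi$-property of the order-$p$ subgroups of $P$'', which is not part of the hypothesis unless $|P|=p^2$. As written, this is a plan with a significant error and a missing endgame, not a proof.
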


    \begin{theorem}[{\cite[Proposition 1.6]{Chen-2013}}]\label{minamal}
    	Let $ E $ be a normal subgroup of $ G $ and let $ P $ be a Sylow $ p $-subgroup of $ E $. If  every cyclic subgroup of $ P $ of prime order or order $ 4 $ (when $ P $ is
    	not quaternion-free)  satisfies the partial $ \Pi $-property in $ G $, then  $ E\leq Z_{\UU_{p}}(G) $.
    \end{theorem}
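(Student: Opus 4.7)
The plan is to prove the statement by induction on $|G|+|E|$, showing that every $G$-chief factor of $G$ lying inside $E$ is $\UU_p$-central. The base case $E=1$ is trivial. For the inductive step, pick a minimal normal subgroup $N$ of $G$ contained in $E$; I first aim to show $N\leq Z_{\UU_p}(G)$, and then invoke the induction on $G/N$ with normal subgroup $E/N$. The inductive step will use two standard inheritance features of the partial $\Pi$-property (cf.~\cite{Chen-2013}): the property passes to quotients $G/K$, and cyclic subgroups of $PK/K$ of order $p$ or $4$ lift to cyclic subgroups of $P$ of the same orders. After conjugating $P$ if necessary, I may also assume $N\leq P$ whenever $N$ is a $p$-group.

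To establish $N\leq Z_{\UU_p}(G)$, I split on the isomorphism type of $N$. If $p\nmid|N|$, every $G$-chief factor of $N$ is a $p'$-group and hence $\UU_p$-central, and we are done. If $N$ is non-abelian, it is a direct product of non-abelian simple groups; I derive a contradiction by applying the partial $\Pi$-property to a cyclic subgroup of prime order (or of order $4$ when $p=2$ requires it) inside a Sylow $p$-subgroup of a simple factor, together with the standard fact that no non-abelian simple group admits a cyclic subgroup of order $p$ whose normalizer has $p$-power index. The main case is $N$ elementary abelian of order $p^n$, for which the claim is $n=1$. Suppose $n\geq 2$ and pick $x\in N$ of order $p$. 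By choosing the chief series of $G$ granted by the partial $\Pi$-property (using quotient inheritance to reduce to a chief series in which $N$ appears as one of its factors), the defining condition applied to $\langle x\rangle$ at the chief factor $N$ gives that $|G:N_G(\langle x\rangle)|$ is a $p$-power. Thus every $G$-orbit on the set $\mathcal{L}$ of $1$-dimensional $\mathbb{F}_p$-subspaces of $N$ has $p$-power length. Since $|\mathcal{L}|=(p^n-1)/(p-1)=1+p+\cdots+p^{n-1}\equiv 1\pmod p$, at least one orbit has size $1$, that is, there is a $G$-invariant $1$-dimensional subspace of $N$. Irreducibility of $N$ as an $\mathbb{F}_p[G/C_G(N)]$-module then forces $n=1$, a contradiction.

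With $N\leq Z_{\UU_p}(G)$ in hand, the inheritance of the partial $\Pi$-property to $G/N$ combined with the inductive hypothesis gives $E/N\leq Z_{\UU_p}(G/N)=Z_{\UU_p}(G)/N$, hence $E\leq Z_{\UU_p}(G)$. The main obstacles I anticipate are twofold. First, carefully verifying the refinement/quotient properties of the partial $\Pi$-property needed to extract the clean numerical conclusion at the chief factor isomorphic to $N$, since the chief series provided by the hypothesis is not under our control; one must massage the problem using inheritance to place $N$ as a term of a usable chief series. Second, ruling out the non-abelian minimal-normal case, which may require invoking either the classification of finite simple groups or detailed structural information about Sylow $p$-normalizers. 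The order-$4$ hypothesis is essential precisely when $p=2$ and $P$ contains a quaternion $2$-subgroup: there the classical minimal-subgroup arguments (following It\^o) require the additional control on order-$4$ elements to avoid the well-known exceptions.
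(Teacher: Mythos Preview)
The paper does not give its own proof of this statement; it is quoted verbatim from \cite[Proposition~1.6]{Chen-2013} as background, so there is no in-paper argument to compare against. I therefore assess your outline on its own merits.

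There is a genuine gap in the inductive step. You claim that cyclic subgroups of $PN/N$ of order $p$ or $4$ ``lift to cyclic subgroups of $P$ of the same orders'' and then appeal to quotient inheritance of the partial $\Pi$-property. Both moves fail precisely when $N$ is the order-$p$ minimal normal subgroup you have just produced. Since $|N|=p$ and $N\unlhd G$, we have $N\leq Z(P)$; if $x\in P$ has order $p^{2}$ with $\langle x^{p}\rangle=N$ (and for odd $p$ nothing in the hypothesis prevents this), then $\langle xN\rangle$ has order $p$ in $P/N$, yet every element of $xN=\{x^{1+kp}:0\le k<p\}$ has order $p^{2}$, so no lift of order $p$ exists. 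Independently, the only inheritance lemma available (Lemma~\ref{over}) requires $N\leq H$ or $(|H|,|N|)=1$; neither holds for $H=\langle y\rangle$ of order $p$ with $y\notin N$. Hence the hypothesis does not descend to $(G/N,E/N)$ and your induction does not close. The way the present paper handles the analogous difficulty (see Step~8 in the proof of Theorem~\ref{2minimal}) is to avoid passing to quotients altogether: one reduces to $O_{p'}(E)=1$, works inside the normal $p$-subgroup $O_{p}(E)$, and uses Thompson critical subgroups together with Lemmas~\ref{hypercenter} and~\ref{critical} to bound the exponent of the relevant section by $p$ (or $4$ in the non-quaternion-free case), after which your orbit-counting argument on order-$p$ subgroups becomes legitimately applicable to \emph{every} $p$-chief factor below $E$.

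Your treatment of the bottom chief factor $N$ itself is correct and pleasant: Lemma~\ref{pass} lets you place $N$ in the chief series, and the congruence $(p^{n}-1)/(p-1)\equiv 1\pmod p$ forces a $G$-fixed line, contradicting irreducibility when $n\ge 2$. The non-abelian case is easier than you fear and needs no classification: from $|G:N_{G}(\langle x\rangle)|$ a $p$-power one gets $G=P_{0}\,N_{G}(\langle x\rangle)$ for a Sylow $p$-subgroup $P_{0}\ni x$ of $G$, hence $\langle x\rangle^{G}=\langle x\rangle^{P_{0}}\leq P_{0}$ is a nontrivial normal $p$-subgroup of $G$ contained in $N$, impossible for non-abelian minimal normal $N$.
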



    In this note, we investigate the structure of the groups in which some second minimal or second maximal  subgroups of a Sylow subgroup  satisfy  the partial $ \Pi $-property. Our  results  are as follows:

    \begin{theorem}\label{2-mini}
    Let $ P $ be a Sylow $ p $-subgroup of $ G $ with $ |P|\geq  p^{2} $. Assume that every $ 2 $-minimal subgroup of  $ P $  satisfies the partial $ \Pi $-property in $ G $.  Then $ G $ is $ p $-soluble with  $ p $-length at most $ 1 $.
    \end{theorem}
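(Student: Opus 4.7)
I would argue by induction on $|G|$. Suppose the theorem fails and let $G$ be a counterexample of minimal order; the aim is to derive a contradiction.

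The first step is to verify that the hypothesis descends to suitable quotients. For a minimal normal subgroup $N$ of $G$ such that $PN/N$ is a nontrivial Sylow $p$-subgroup of $G/N$ of order at least $p^{2}$, I would check that each $2$-minimal subgroup of $PN/N$ arises as the image of an appropriate $2$-minimal subgroup of $P$, and then use the behaviour of the partial $\Pi$-property under quotients established in \cite[Section 2]{Chen-2013} to transfer the hypothesis to $G/N$. By minimality of $|G|$, it follows that $G/N$ is $p$-soluble of $p$-length at most $1$. A parallel argument handles $N$ being a $p'$-group (the image of $P$ in $G/N$ is isomorphic to $P$), allowing the reduction to $O_{p'}(G)=1$. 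The edge case $|PN/N| < p^{2}$ needs separate treatment, but here one can appeal directly to Theorem \ref{maximal} or Theorem \ref{minamal} applied to $G/N$.

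Next, I would split according to whether $O_p(G)$ is trivial. If $O_p(G) \neq 1$, a minimal normal $p$-subgroup $L$ together with the inductive conclusion for $G/L$ and a standard argument on the inheritance of $p$-length under normal $p$-extensions would give $G$ itself $p$-length $\leq 1$, contradicting that $G$ is a counterexample. So we are reduced to the case $O_p(G)=O_{p'}(G)=1$. Then $F^{\ast}(G)=E(G)$ is a nontrivial central product of components whose orders are divisible by $p$. Let $S$ be such a component; then $P_S := P \cap S$ is a Sylow $p$-subgroup of $S$, and each $2$-minimal subgroup of $P_S$ is also a $2$-minimal subgroup of $P$, hence satisfies the partial $\Pi$-property in $G$. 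One would then show that this severely restricts the structure of $S/Z(S)$, and invoke the classification of finite simple groups (in the spirit of the arguments used for Theorems \ref{maximal} and \ref{minamal}) to rule out every non-abelian simple group arising as $S/Z(S)$, producing the required contradiction.

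The main obstacle I anticipate is this component-elimination step. The partial $\Pi$-property of a single subgroup of order $p^{2}$ is a mild condition, and extracting enough structural information to contradict the existence of a non-abelian simple component typically demands detailed case-by-case analysis across the classification, comparing Sylow structures of each simple group family against the hypothesis. A secondary but more technical difficulty is the descent lemma: when $N \leq P$ is a $p$-group, not every $2$-minimal subgroup of $P/N$ need arise as $HN/N$ with $H \leq P$ of order exactly $p^{2}$, so one must choose preimages carefully and combine the partial $\Pi$-property with arguments about $p$-subgroups of bounded rank to ensure the inheritance really goes through.
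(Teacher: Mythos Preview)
Your proposed induction collapses at two places, and neither is a technicality.

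\emph{First gap: the ``inheritance of $p$-length'' step is false.} You write that if $L\le O_p(G)$ is a minimal normal $p$-subgroup and $G/L$ has $p$-length at most $1$, then ``a standard argument on the inheritance of $p$-length under normal $p$-extensions'' gives $p$-length $\le 1$ for $G$. No such argument exists: extending a group of $p$-length $1$ by a normal $p$-subgroup can raise the $p$-length to $2$. Take $G=S_4$, $p=2$, $L=V_4$; then $G/L\cong S_3$ has $2$-length $1$, yet $S_4$ has $2$-length $2$. So even if the hypothesis did descend to $G/L$, your inductive conclusion would not close the argument. The paper handles this by proving a separate theorem (Theorem~\ref{2-minimal}) for $p$-soluble $G$, where the minimal counterexample is analysed via the $\FF$-residual for $\FF=\{\text{$p$-soluble groups of $p$-length}\le 1\}$ and delicate choices of order-$p^2$ subgroups inside $G^{\FF}$; this is the real content of the $p$-length bound and cannot be shortcut.

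\emph{Second gap: the descent to $G/N$ for a $p$-group $N$ does not go through.} You need every $2$-minimal subgroup of $P/N$ to satisfy the partial $\Pi$-property in $G/N$. But a subgroup $H/N\le P/N$ of order $p^{2}$ has preimage $H$ of order $p^{2}|N|$, which is \emph{not} a $2$-minimal subgroup of $P$ unless $N=1$; and Lemma~\ref{over} only transfers the property along $N$ when $N\le H$ or $(|H|,|N|)=1$. There is no way to realise $H/N$ as $KN/N$ with $|K|=p^{2}$ in general, so the hypothesis genuinely fails to descend through normal $p$-subgroups. You flag this as a ``secondary technical difficulty'', but it is structural: the paper never attempts to pass the full hypothesis to $G/N$ for $N$ a $p$-group. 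Instead, both in the $p$-solubility proof (Theorem~\ref{2minimal}) and in the $p$-length proof (Theorem~\ref{2-minimal}) it works directly with individual subgroups of order $p^{2}$ against a chief series passing through a specified normal subgroup (Lemma~\ref{pass}), and uses Lemma~\ref{subgroup} to pass to proper \emph{subgroups} (not quotients). Your component-elimination programme via the classification is therefore addressing a reduction ($O_p(G)=O_{p'}(G)=1$) that you cannot legitimately reach; in the paper the non-$p$-soluble case is eliminated without CFSG, using instead $PSL(2,p)$-structure for chief factors of order $p^{2}$ and a hypercentre argument (Steps~6--9 of Theorem~\ref{2minimal}).
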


    \begin{theorem}\label{2-maxi}
     Let $ P $ be a Sylow $ p $-subgroup of $ G $ with $ |P|\geq  p^{3} $. Assume that every $ 2 $-maximal subgroup of $ P $ satisfies the partial $ \Pi $-property in $ G $ and  every cyclic subgroup of $ P $ of order  $ 4 $ satisfies the partial $ \Pi $-property in $ G $ when $ P $ is
     isomorphic to $ Q_{8} $.  Then $ G $ is $ p $-soluble with   $ p $-length  at most $ 1 $.
    \end{theorem}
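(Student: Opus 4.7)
The plan is to argue by minimal counterexample. Let $G$ be of minimal order satisfying the hypotheses of Theorem \ref{2-maxi} but not the conclusion, and let $P$ be a Sylow $p$-subgroup of $G$. First I dispose of the base case $|P|=p^{3}$. Here every $2$-maximal subgroup of $P$ has order $p$, hence is a cyclic subgroup of prime order $p$; moreover, when $|P|=p^{3}$ the only non-quaternion-free possibility is $P\cong Q_{8}$, precisely the situation in which the auxiliary hypothesis on cyclic subgroups of order $4$ is imposed. Consequently Theorem \ref{minamal} applies with $E=G$, yielding $G\leq Z_{\UU_{p}}(G)$. Hence $G$ is $p$-supersoluble, in particular $p$-soluble of $p$-length at most $1$, contradicting the choice of $G$. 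So from now on $|P|\geq p^{4}$.

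For the inductive reductions, two facts drive the argument: in the $p$-group $P$, the $2$-maximal subgroups are exactly the subgroups of index $p^{2}$; and the partial $\Pi$-property transfers to natural quotient maps (a standard lemma from \cite{Chen-2013}). Combining them, for any $G$-normal subgroup $V\leq P$, every $2$-maximal subgroup of $P/V$ has the form $H/V$ with $H$ a $2$-maximal subgroup of $P$ containing $V$, and $H/V$ inherits the partial $\Pi$-property in $G/V$. Passing to $G/O_{p'}(G)$ (whose Sylow is isomorphic to $P$ of order still $\geq p^{4}$, hence not $\cong Q_{8}$), the induction yields the conclusion for the quotient and hence for $G$; so $O_{p'}(G)=1$. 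Similarly, if $O_{p}(G)\neq 1$ I would pick a $G$-minimal $V\leq O_{p}(G)$ and close the favorable subcase $|P/V|\geq p^{3}$, $P/V\not\cong Q_{8}$ by induction on $G/V$, using that $V$ is a $p$-group.

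The crux is then the remaining boundary cases: $|P/V|\leq p^{2}$, $P/V\cong Q_{8}$, or $O_{p}(G)=1$. For $|P/V|=p^{2}$, maximal subgroups of $P/V$ pull back to maximal subgroups of $P$ containing $V$; I would argue that these inherit the partial $\Pi$-property from the $2$-maximal hypothesis combined with a direct analysis of $V$, and then invoke Theorem \ref{maximal} on the quotient. The case $P/V\cong Q_{8}$ demands an analogous transfer of the order-$4$ cyclic hypothesis, which is not a priori available for $P$ when $|P|\geq p^{4}$. If instead $O_{p}(G)=1$, then since also $O_{p'}(G)=1$ we have $F^{*}(G)=E(G)$, a central product of components; here I would use the partial $\Pi$-property of $2$-maximal subgroups of $P$ to constrain the Sylow $p$-structure of each component, ruling out components whose $p$-part has order $\geq p^{3}$. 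The main obstacle lies exactly in this boundary analysis: the $2$-maximal hypothesis on $P$ does not descend cleanly to a maximal-subgroup hypothesis on $P/V$ when $V$ is large, so Theorems \ref{maximal} and \ref{minamal} are not applied automatically but only through supplementary structural arguments, which couple the quotient induction with the layer analysis in a delicate way.
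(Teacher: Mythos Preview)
Your base case $|P|=p^{3}$ is correct: the $2$-maximal subgroups are precisely the subgroups of order $p$, the only non-quaternion-free group of order $p^{3}$ is $Q_{8}$, and Theorem~\ref{minamal} applies with $E=G$. This is essentially how the paper disposes of the $Q_{8}$ exception as well.

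For $|P|\geq p^{4}$, however, there is a genuine gap even before the boundary cases you flag. In the ``favorable'' subcase where $V\leq O_{p}(G)$ is minimal normal with $|P/V|\geq p^{3}$ and $P/V\not\cong Q_{8}$, you infer that $G/V$ satisfies the conclusion and hence so does $G$, ``using that $V$ is a $p$-group''. This transfers $p$-solubility but not the $p$-length bound: the class of $p$-soluble groups of $p$-length at most $1$ is a saturated formation but is not closed under extensions by normal $p$-subgroups (for instance $S_{4}$ has $2$-length $2$ while $S_{4}/O_{2}(S_{4})\cong S_{3}$ has $2$-length $1$, and the same obstruction arises with arbitrarily large Sylow subgroups). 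So the minimal-counterexample argument does not close even in the easy case, and on top of that your boundary cases $|P/V|\leq p^{2}$, $P/V\cong Q_{8}$, and $O_{p}(G)=1$ are, as you acknowledge, left as sketches.

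The paper avoids both difficulties by decoupling the two assertions. A separate structural theorem (Theorem~\ref{classification}) shows, under the $2$-maximal hypothesis alone, that either $G$ is $p$-soluble, or $|P|=p^{2}$, or $P\cong Q_{8}$; this is where the real work lives, including the layer analysis and the delicate handling of quotients with $P/K\cong Q_{8}$, and it is not proved by naive induction through $G/V$. With $|P|\geq p^{3}$ the extra order-$4$ hypothesis kills the $Q_{8}$ alternative exactly as in your base-case argument, forcing $p$-solubility. The $p$-length bound is then obtained from an independent result (Theorem~\ref{2-maximal}) for $p$-soluble $G$, proved via the saturated formation $\FF$ of groups of $p$-length at most $1$, where the minimal counterexample has $G^{\FF}$ a $p$-group of order $p^{2}$ and no quotient-by-$V$ step is invoked.
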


    In fact, we can not obtain the $ p $-supersolubility of $ G $ in Theorem \ref{2-mini} or Theorem \ref{2-maxi}. Let us see the following example.

    \begin{example}
    	Consider an elementary abelian group $ U=\langle x, y|x^{5}=y^{5}=1, xy=yx \rangle $ of order $ 25 $.  Let $ \alpha $ be an automorphism of $ U $ of order $ 3 $ such that $ x^{\alpha}=y, y^{\alpha}=x^{-1}y^{-1} $. Let  $ V=\langle a, b \rangle $ be a copy of $ U $ and $ G = (U\times V)\rtimes \langle \alpha \rangle $. For any subgroup $ H $ of $ G $ of order $ 25 $, there exists a minimal normal subgroup $ K $ such that $ H\cap K = 1 $ (for details, see \cite[Example]{Guo-Xie-Li}). Then $ G $ has a  chief series $$ \varGamma_{G}: 1 =G_{0} <K= G_{1} < HK=G_{2} < G_{3}= G $$  such that for every $ G $-chief factor $ G_{i}/G_{i-1} $ $(1\leq i\leq 3) $ of $ \varGamma_{G} $, $ | G  : N _{G} (HG_{i-1}\cap G_{i})| $ is a $ 5 $-number. It means that $ H $ satisfies the partial $ \Pi $-property in $ G $. However, $ G $ is not $ 5 $-supersoluble.
    \end{example}

\section{Preliminaries}

  \begin{lemma}[{\cite[Lemma 2.1]{Chen-2013}}]\label{over}
  	Let $ H \leq G $ and $ N \unlhd G $. If either $ N \leq H $ or $ (|H|, |N|)=1 $ and $ H $ satisfies the partial $ \Pi $-property in $ G $, then $ HN/N $ satisfies the partial $ \Pi $-property in $ G/N $.
  \end{lemma}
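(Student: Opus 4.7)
The plan is to lift a chief series of $G$ that witnesses the partial $\Pi$-property of $H$ to a chief series of $G/N$, and then verify the index condition factor by factor in each of the two cases.

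First I would fix a chief series $\Gamma_G : 1 = G_0 < G_1 < \cdots < G_n = G$ witnessing the partial $\Pi$-property of $H$ in $G$, and form the chain of normal subgroups $G_iN/N$ in $G/N$. For each step where $G_{i-1}N < G_iN$, I would check that $G_i\cap N\leq G_{i-1}$: if not, then $G_{i-1}(G_i\cap N)$ would be normal in $G$ and strictly between $G_{i-1}$ and $G_i$, contradicting that $G_i/G_{i-1}$ is a chief factor. Hence each non-collapsing factor satisfies $G_iN/G_{i-1}N\cong G_i/G_{i-1}$ as $G$-modules, so after removing repetitions we obtain a genuine chief series $\overline{\Gamma}$ of $G/N$.

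Next, I would set up the translation between hypothesis and conclusion. Writing $X_i=HG_{i-1}\cap G_i$ and $Y_i=HG_{i-1}N\cap G_iN$, the inclusions $G_{i-1}\leq X_i$ and $G_{i-1}N\leq Y_i$ give $N_{G/G_{i-1}}(X_i/G_{i-1})=N_G(X_i)/G_{i-1}$ and $N_{(G/N)/(G_{i-1}N/N)}(Y_i/G_{i-1}N)=N_G(Y_i)/G_{i-1}N$. The hypothesis says that $|G:N_G(X_i)|$ is a $\pi(X_i/G_{i-1})$-number, and the conclusion to be verified is that $|G:N_G(Y_i)|$ is a $\pi(Y_i/G_{i-1}N)$-number. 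Because $N_G(X_i)$ clearly normalizes $X_iN$, one always has $N_G(X_i)\leq N_G(X_iN)$.

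Finally I would handle the two cases. In case $N\leq H$, one has $HG_{i-1}N=HG_{i-1}$; a short Dedekind calculation shows $Y_i=X_iN$, and combined with $G_i\cap N\leq G_{i-1}$ from the first step, this yields the $G$-module isomorphism $Y_i/G_{i-1}N\cong X_i/G_{i-1}$, so the two $\pi$-sets coincide and $|G:N_G(Y_i)|\mid |G:N_G(X_i)|$ does the job. In the coprime case $(|H|,|N|)=1$, I would exploit the standard coprime identity $HN\cap K=H(N\cap K)$ for $N$-invariant $K$ (together with the Dedekind law) to express $Y_i$ in terms of $X_i$ and $N$, and then track $\pi$-sets using the fact that $\pi(H)\cap\pi(N)=\emptyset$ forces the primes contributed by the $N$-part to be disjoint from those of $X_i/G_{i-1}$.

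The main obstacle I expect is the bookkeeping in the coprime case: matching $\pi(Y_i/G_{i-1}N)$ with the $\pi(X_i/G_{i-1})$ produced by the hypothesis requires repeated application of modular and coprime identities, and one has to make sure that the $\pi$-set does not shrink in an uncontrolled way when passing from $X_i$ to $Y_i$. Once this identification of the relevant chief sections and their normalizers is set up cleanly, both cases reduce to the single observation that $N_G(X_i)\leq N_G(Y_i)$.
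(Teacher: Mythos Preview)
The paper does not supply a proof of this lemma; it is quoted without argument from \cite[Lemma 2.1]{Chen-2013} and used as a black box, so there is no in-paper proof to compare your proposal against.

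That said, your plan is the natural one and works cleanly in the case $N\leq H$: the observation that a non-collapsing step satisfies $G_i\cap G_{i-1}N=G_{i-1}$ (so the lifted series, after deleting repetitions, is a genuine chief series of $G/N$), the Dedekind computation $Y_i=X_iN$ using $N\leq H\leq HG_{i-1}$, the resulting identification $Y_i/G_{i-1}N\cong X_i/G_{i-1}$, and the inclusion $N_G(X_i)\leq N_G(X_iN)=N_G(Y_i)$ together give exactly what is needed. The coprime case is where your sketch becomes vague, and the ``coprime identity $HN\cap K=H(N\cap K)$ for $N$-invariant $K$'' you invoke is not a valid general fact in the form stated. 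A cleaner route there is to note that $|Y_i/G_{i-1}N|$ divides both $|H|$ and $|G_iN/G_{i-1}N|=|G_i/G_{i-1}|$, and then split according to whether the chief factor $G_i/G_{i-1}$ is a $\pi(N)$-group (in which case both $X_i/G_{i-1}$ and $Y_i/G_{i-1}N$ are trivial and the condition is vacuous) or not, handling the remaining case by showing $G_i\cap HG_{i-1}N=X_i$ directly. Since the full argument appears in the cited reference, that is the place to check the coprime details.
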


  \begin{lemma}\label{subgroup}
  	Let  $ H\leq N \leq G $.  If $ H $ is a $ p $-subgroup of $ G $ and $ H $ satisfies the partial $ \Pi $-property in $ G $, then $ H $ satisfies the partial $ \Pi $-property in $ N $.
  	
  \end{lemma}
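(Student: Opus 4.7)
The plan is to construct a chief series of $N$ by intersecting the given chief series of $G$ with $N$ and refining, then to verify the partial $\Pi$-condition factor by factor, using crucially that $H$ is a $p$-group.

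First, I would fix a chief series $\varGamma_{G}: 1=G_{0}<G_{1}<\cdots<G_{n}=G$ of $G$ witnessing the partial $\Pi$-property of $H$. Setting $C_{i}:=G_{i}\cap N\trianglelefteq N$, the resulting (possibly non-strict) chain can be refined to a chief series $\varGamma_{N}: 1=K_{0}<K_{1}<\cdots<K_{m}=N$ of $N$; for each $j$ there is an index $i=i(j)$ with $C_{i-1}\leq K_{j-1}<K_{j}\leq C_{i}$. I then need to verify the normalizer condition for every $D_{j}:=(H\cap K_{j})K_{j-1}/K_{j-1}$.

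Fix such a $K_{j}/K_{j-1}$; since $H$ is a $p$-group, $D_{j}$ is a $p$-subgroup. If $D_{j}=1$ the condition is trivial, so assume $H\cap K_{j}\not\leq K_{j-1}$. Since $H\leq N$ and $G_{i(j)-1}\cap N\leq K_{j-1}$, this forces $H\cap G_{i(j)}\not\leq G_{i(j)-1}$, so $D_{i(j)}:=(H\cap G_{i(j)})G_{i(j)-1}/G_{i(j)-1}\neq 1$. The hypothesis then gives that $T:=N_{G}((H\cap G_{i(j)})G_{i(j)-1})$ has $p$-power index in $G$, so $T\cap N$ has $p$-power index in $N$. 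The main step is to show $T\cap N\leq N_{N}((H\cap K_{j})K_{j-1})$, and this is accomplished by two successive applications of Dedekind's law: since $H\cap G_{i(j)}\leq N$, Dedekind yields that $T\cap N$ normalizes $(H\cap G_{i(j)})(G_{i(j)-1}\cap N)$ in $N$; multiplying by the $N$-normal subgroup $K_{j-1}$ (which contains $G_{i(j)-1}\cap N$) shows $T\cap N$ normalizes $(H\cap G_{i(j)})K_{j-1}$; and intersecting with the $N$-normal subgroup $K_{j}$ and applying Dedekind with $K_{j-1}\leq K_{j}\leq G_{i(j)}$ gives $(H\cap G_{i(j)})K_{j-1}\cap K_{j}=(H\cap K_{j})K_{j-1}$, still normalized by $T\cap N$. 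Hence $|N/K_{j-1}:N_{N/K_{j-1}}(D_{j})|$ divides the $p$-number $|N:T\cap N|$.

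The main obstacle is exactly this scale-gap: the normalizer information provided by the hypothesis lives at the coarser $G$-chief-factor level inside $G$, while the condition to be verified lives at the (possibly finer) $N$-chief-factor level inside $N$. The twofold Dedekind manoeuvre bridges the two, using crucially both $H\leq N$ (to drop into $N$) and the normality of the $K_{j}$ in $N$ (to refine inside $C_{i(j)}/C_{i(j)-1}$); the assumption that $H$ is a $p$-subgroup ensures $D_{j}$ is a $p$-group so that the required $\pi(D_{j})$-number condition reduces to a $p$-number condition.
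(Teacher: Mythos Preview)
Your proposal is correct and follows essentially the same approach as the paper: intersect the witnessing $G$-chief series with $N$, refine to an $N$-chief series, and use Dedekind's modular law to pass the normalizer condition from the $G$-level factor $G_{i}/G_{i-1}$ down to the refined $N$-level factor $K_{j}/K_{j-1}$. The paper's writeup is terser (it works directly with the identity $H(G_{i-1}\cap N)\cap(G_{i}\cap N)=(H\cap G_{i})G_{i-1}\cap N$ and then intersects with $A$ and multiplies by $B$), but the two arguments are the same in substance.
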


  \begin{proof}
  	By hypothesis, there exists a chief series $ \varGamma_{G}: 1 =G_{0} < G_{1} < \cdot\cdot\cdot < G_{n}= G $ of $ G $ such that for every $ G $-chief factor $ G_{i}/G_{i-1}$ $ (1\leq i\leq n) $ of $ \varGamma_{G} $, $ | G  : N _{G} (HG_{i-1}\cap G_{i})| $ is a $ p $-number. Therefore, $ \varGamma_{N} : 1=G_{0}\cap N\leq G_{1}\cap N\leq\cdot\cdot\cdot <G_{n}\cap N=N $ is, avoiding repetitions, a normal series of $ N $.  For any  normal section $ (G_{i}\cap N)/(G_{i-1}\cap N) $ $ (1\leq i\leq n) $ of $ N $,   we have that $ H(G_{i-1}\cap N)\cap (G_{i}\cap N)=(H\cap G_{i})G_{i-1}\cap N $. Since $ | G  : N _{G} (HG_{i-1}\cap G_{i})| $ is a $ p $-number, we deduce that $ | N  : N\cap N _{G} (HG_{i-1}\cap G_{i})| $ is a $ p $-number, and so $ | N  : N _{N} (HG_{i-1}\cap G_{i}\cap N)| $ is a $ p $-number.  Since $ H(G_{i-1} \cap N)\cap (G_{i}\cap N)=(H\cap G_{i})G_{i-1}\cap N $, it follows that $ | N  : N _{N} (H(G_{i-1} \cap N)\cap (G_{i}\cap N))| $ is a $ p $-number. Let  $ A/B$ be a chief factor of $ N $ such that $ G_{i-1}\cap N\leq B\leq A\leq G_{i}\cap N $. Then $ N _{N} (H(G_{i-1} \cap N)\cap (G_{i}\cap N))\leq N _{N} ((H(G_{i-1} \cap N)\cap A)B)=N_{N}(HB\cap A) $. Hence $ |N:N_{N}(HB\cap A)| $ is a $ p $-number. This  means that  $ H $ satisfies the partial $ \Pi $-property in $ N $.
  \end{proof}

  \begin{lemma}\label{pass}
  	Let $ H $ be a $ p $-subgroup of $ G $  and $ N $ be  a normal subgroup of $ G $  containing $ H $. If  $ H $ satisfies the partial $ \Pi $-property in $ G $, then $ G $ has  a chief series $$ \varOmega_{G}: 1 =G^{*}_{0} < G^{*}_{1} < \cdot\cdot\cdot <G^{*}_{r}=N < \cdot\cdot\cdot < G^{*}_{n}= G $$  passing through $ N $ such that $ |G:N_{G}(HG^{*}_{i-1}\cap G^{*}_{i})| $ is a $ p $-number  for every $ G $-chief factor $ G^{*}_{i}/G^{*}_{i-1} $ $ (1\leq i\leq n) $ of $ \varOmega_{G} $.
  \end{lemma}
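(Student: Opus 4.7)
The plan is to refine the given chief series of $G$ using a Jordan--H\"older style construction involving $N$, and then verify that the $p$-number normalizer condition survives for every new chief factor. Throughout, I exploit the fact that since $H$ is a $p$-subgroup, the partial $\Pi$-property for $H$ simplifies (as in the proof of Lemma \ref{subgroup}) to: $|G:N_G(HG_{i-1}\cap G_i)|$ is a $p$-number for each chief factor of the original series $\varGamma_G$.

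First, I would set $H_i = G_i\cap N$ and $K_i = G_iN$ for $0\leq i\leq n$, producing the normal series of $G$
\[ 1 = H_0 \leq H_1 \leq \cdots \leq H_n = N = K_0 \leq K_1 \leq \cdots \leq K_n = G. \]
A routine analysis of the chief factor $G_i/G_{i-1}$ against the $G$-normal subgroup $(NG_{i-1}\cap G_i)/G_{i-1}$ (which must be either trivial or the whole factor) shows that for each $i$ exactly one of $H_{i-1} < H_i$ or $K_{i-1} < K_i$ holds, and in that case the relevant factor is $G$-isomorphic to $G_i/G_{i-1}$, hence itself a chief factor of $G$. Deleting the repeated terms produces a chief series $\varOmega_G: 1 = G_0^* < G_1^* < \cdots < G_r^* = N < \cdots < G_n^* = G$ passing through $N$.

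Next, I would check the $p$-number condition for each new chief factor $G_i^*/G_{i-1}^*$, which arises from some original chief factor $G_j/G_{j-1}$. If the factor lies above $N$, then $G_{i-1}^* = NG_{j-1}\supseteq H$, hence $HG_{i-1}^*\cap G_i^* = G_{i-1}^*$ is normal in $G$ and the condition is trivially satisfied. If it lies below $N$, then $G_{i-1}^* = N\cap G_{j-1}$ and $G_i^* = N\cap G_j$; combining the Dedekind-type identity $HG_{j-1}\cap N = H(G_{j-1}\cap N)$ (which uses $H\leq N$) with associativity of intersection yields
\[ HG_{i-1}^*\cap G_i^* \;=\; H(G_{j-1}\cap N)\cap(G_j\cap N) \;=\; (HG_{j-1}\cap G_j)\cap N. \]
Since $N\trianglelefteq G$, any element of $G$ normalizing $HG_{j-1}\cap G_j$ also normalizes its intersection with $N$, so $N_G(HG_{j-1}\cap G_j) \leq N_G(HG_{i-1}^*\cap G_i^*)$. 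The former has $p$-power index in $G$ by the original chief series, so the latter does too, as required.

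The main obstacle is the careful bookkeeping of the Jordan--H\"older-style refinement, namely establishing the dichotomy that exactly one of $H_i/H_{i-1}$ or $K_i/K_{i-1}$ is strict for each $i$ and that the surviving factor is $G$-isomorphic to $G_i/G_{i-1}$. Beyond that, the only subtle point is the precise identification $HG_{i-1}^*\cap G_i^* = (HG_{j-1}\cap G_j)\cap N$ in the below-$N$ case, which relies crucially on the hypothesis $H\leq N$ and allows the $p$-power index to be transferred to the refined series via the normality of $N$.
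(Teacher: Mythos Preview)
Your proposal is correct and follows essentially the same approach as the paper: intersect the original chief series with $N$ to obtain the portion of $\varOmega_G$ below $N$, use the Dedekind identity $H(G_{j-1}\cap N)\cap(G_j\cap N)=(HG_{j-1}\cap G_j)\cap N$ together with $N\trianglelefteq G$ to transfer the $p$-power index condition, and observe that factors above $N$ are trivial because $H\leq N\leq G_{i-1}^*$. The only difference is cosmetic: the paper simply asserts that $\varGamma_G\cap N$ is (after deleting repetitions) part of a chief series of $G$ and then \emph{completes it arbitrarily} above $N$, whereas you explicitly build the upper half via $K_i=G_iN$ and carry out the Jordan--H\"older dichotomy; this extra bookkeeping is sound but unnecessary, since the condition above $N$ holds for any completion.
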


  \begin{proof}
  	Since $ H $ satisfies the partial $ \Pi $-property in  $ G $, there exists a chief series $ \varGamma_{G}: 1 =G_{0} < G_{1} < \cdot\cdot\cdot < G_{n}= G $ of $ G $ such that for every $ G $-chief factor $ G_{i}/G_{i-1} $ $ (1\leq i\leq n) $ of $ \varGamma_{G} $, $ | G  : N _{G} (HG_{i-1}\cap G_{i})| $ is a $ p $-number. Since $ N $ is a normal subgroup of $ G $, we see that $ \varGamma_{G}\cap N: 1 =G_{0}\cap N < G_{1}\cap N < \cdot\cdot\cdot < G_{n}\cap N= N $ is, avoiding repetitions, part of a chief series of $ G $. Note that $ H\leq N $, and so $ |G:N_{G}((H(G_{i-1}\cap N))\cap (G_{i} \cap N))| $ is a $ p $-number.
  	
  	We can complete $ \varGamma_{G}\cap N $ to obtain a chief series $ \varOmega_{G} $ of $ G $.  Then $ G $ has  a chief series $$ \varOmega_{G}: 1 =G^{*}_{0} < G^{*}_{1} < \cdot\cdot\cdot <G^{*}_{r}=N < \cdot\cdot\cdot < G^{*}_{n}= G $$  such that $ |G:N_{G}(HG^{*}_{i-1}\cap G^{*}_{i})| $ is a $ p $-number  for every $ G $-chief factor $ G^{*}_{i}/G^{*}_{i-1} $ $ (1\leq i\leq n) $ of $ \varOmega_{G} $,  as desired.
  \end{proof}

  \begin{lemma}[{\cite[Lemma 6]{Ballester-2011}}]\label{component}
  	 Let $ G $ be a group such that $ O_{p'}(G) = 1 $ for some prime $ p $. Suppose that $ E (G) $ is non-trivial and $ F(G) = {\rm Soc}(G) $. For every component $ H $ of $ G $, we have:
  	
  	 \vskip0.08in
  	
  	 \noindent{\rm (1)} $ p $ divides the order of $ H/Z(H) $, and
  	
  	 \noindent{\rm (2)} the Sylow p-subgroups of $ H/Z(H) $ are non-cyclic.
  \end{lemma}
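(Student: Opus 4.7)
The plan is first to show that $ F(G)=O_p(G) $ is a $ p $-group containing $ Z(H) $, then to verify that $ Z(H) $ is non-trivial, and finally to derive (1) and (2) by producing a contradiction from $ Z(H)=1 $ in each case. Since $ F(G)={\rm Soc}(G) $ is nilpotent, every minimal normal subgroup of $ G $ is abelian, hence an elementary abelian $ q $-group for some prime $ q $; the hypothesis $ O_{p'}(G)=1 $ rules out $ q\neq p $, so $ F(G)=O_p(G) $. Because $ E(G) $ is characteristic in $ G $, the subgroup $ Z(E(G)) $ is abelian and normal in $ G $, hence contained in $ F(G)=O_p(G) $; in particular $ Z(H)\leq Z(E(G)) $ is a $ p $-group.

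I would next show that $ Z(H)\neq 1 $. Let $ E_{0}=\langle H^{G}\rangle $ be the normal closure of $ H $, which is a central product of $ G $-conjugates $ H=H_{1},\ldots,H_{k} $ (each a component). Every non-trivial normal subgroup of $ G $ contains a minimal normal subgroup of $ G $, which lies in $ O_p(G) $, so $ E_{0}\cap O_p(G) $ is a non-trivial normal $ p $-subgroup of $ E_{0} $. Its image in $ E_{0}/Z(E_{0})\cong H_{1}/Z(H_{1})\times\cdots\times H_{k}/Z(H_{k}) $ is a normal $ p $-subgroup of a direct product of non-abelian simple groups, hence trivial, forcing $ E_{0}\cap O_p(G)\leq Z(E_{0}) $. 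As $ Z(E_{0}) $ is generated by the $ G $-conjugates of $ Z(H) $, we conclude $ Z(H)\neq 1 $. Part (1) follows quickly: if $ p\nmid |H/Z(H)| $ then $ Z(H) $ is a normal Sylow $ p $-subgroup of $ H $, and Schur--Zassenhaus gives $ H=Z(H)\times K $ with $ K\cong H/Z(H) $ simple non-abelian; perfectness of $ H $ forces $ H=H'=K'=K $, so $ Z(H)=1 $, a contradiction.

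For (2), suppose that the Sylow $ p $-subgroups of $ H/Z(H) $ are cyclic. Because $ H $ is a perfect central extension of $ H/Z(H) $, the centre $ Z(H) $ is a quotient of the Schur multiplier $ M(H/Z(H)) $. A classical restriction--corestriction argument shows that the $ p $-part of $ M(X) $ injects into $ M(P) $ for any Sylow $ p $-subgroup $ P $ of a finite group $ X $; cyclic groups have trivial Schur multiplier, so the $ p $-part of $ M(H/Z(H)) $ vanishes and $ Z(H) $ has trivial $ p $-part. Combined with $ Z(H) $ being a $ p $-group, this yields $ Z(H)=1 $, again contradicting the second paragraph. The main obstacle is precisely this Schur multiplier step: everything else is routine manipulation of the generalized Fitting subgroup and of Schur--Zassenhaus, whereas the injection $ M(X)_{p}\hookrightarrow M(P) $ relies on a non-trivial classical result from the cohomology of finite groups. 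Avoiding it would require a direct structural argument excluding the existence of a quasisimple group $ H $ with $ Z(H) $ a non-trivial $ p $-group and $ H/Z(H) $ simple with cyclic Sylow $ p $-subgroup.
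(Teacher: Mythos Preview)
The paper does not supply its own proof of this lemma; it is quoted verbatim from \cite[Lemma~6]{Ballester-2011}. Your argument is correct and self-contained: the reduction $F(G)=O_p(G)$ is immediate from $F(G)={\rm Soc}(G)$ and $O_{p'}(G)=1$; the fact that $Z(H)\neq 1$ follows cleanly from your observation that the normal closure $E_0=\langle H^G\rangle$ meets $O_p(G)$ non-trivially and that this intersection must land in $Z(E_0)=\prod Z(H_i)$; part~(1) is then a one-line Schur--Zassenhaus application; and part~(2) is the standard Schur-multiplier argument. Your own diagnosis of the one non-elementary ingredient---the injection $M(X)_p\hookrightarrow M(P)$ for $P\in{\rm Syl}_p(X)$---is accurate, and there is no realistic way to avoid it short of invoking classification-type information about quasisimple groups.
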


  \begin{lemma}[{\cite[Proposition 1]{Ballester-Bolinches-1996}}]\label{important}
  	Let $ \FF  $ be a saturated formation and let $ G $ be a group which does not belong to $ \FF $. Suppose that there exists a maximal subgroup $ M $ of $ G $ such that $ M\in \FF  $  and $ G =
  	MF(G) $. Then $ G^{\FF}/(G^{\FF})' $ is a chief factor of $ G $, $ G^{\FF} $ is a $ p $-group for some prime $ p $,
  	$ G^{\FF} $ has exponent $ p $ if $ p $ is odd and exponent at most $ 4 $ if $ p = 2 $. Moreover, either $ G^{\FF} $
  	is elementary abelian or $ (G^{\FF})'= Z(G^{\FF}) = \Phi(G^{\FF}) $ is elementary abelian.

  \end{lemma}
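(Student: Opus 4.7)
The plan is to argue by induction on $|G|$ (equivalently, consider a minimal counterexample), working with a minimal normal subgroup $N$ of $G$ contained in $L := G^{\FF}$. First I would show $L \le F(G)$: since $G = MF(G)$ with $M \in \FF$, the second isomorphism theorem gives $G/F(G) \cong M/(M\cap F(G)) \in \FF$, whence $L\le F(G)$. In particular $L$ is nilpotent, and any minimal normal $N$ of $G$ contained in $L$ is an elementary abelian $p$-group for some prime $p$. The whole argument then splits on whether $N \le M$ or not.

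In the \emph{easy case} $N\not\le M$, maximality of $M$ forces $G=MN$; since $N$ is abelian, $M\cap N$ is normal in $M$ and in $N$, hence in $G$, so by minimality $M\cap N=1$. Then $G/N\cong M\in\FF$, forcing $L\le N$ and hence $L=N$. All four conclusions hold trivially, as $L$ is elementary abelian of exponent $p$. In the \emph{reduction case} $N\le M$, the hypotheses descend to $G/N$: $M/N$ is maximal in $G/N$, lies in $\FF$, and $G/N=(M/N)(F(G)/N)\le(M/N)F(G/N)$, so $G/N=(M/N)F(G/N)$. Inductively the listed conclusions apply to $(G/N)^{\FF}=L/N$, and the task reduces to lifting them to $L$.

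The main obstacle — and the bulk of the proof — is this \textbf{lifting step}. I would verify it in three pieces. First, that $L$ is a $p$-group: $L$ nilpotent splits as $L=L_p\times L_{p'}$ with $L_{p'}$ characteristic, hence $G$-normal; the minimality of $L$ as the $\FF$-residual together with $G/L_{p'}$ having $\FF$-residual $L/L_{p'}$ forces $L_{p'}=1$, so $L/N$ is also a $p$-group and the two primes agree. Second, that $L/L'$ is a $G$-chief factor: in the reduction case one argues $N\le\Phi(G)$ (otherwise $N$ is $G$-complemented by some maximal subgroup, which would drop us into a variant of the easy case yielding a smaller counterexample), hence $N\le L'$, so $L'N/N=(L/N)'$ and the inductive chief factor lifts. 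Third, that $L$ has the stated exponent and that in the non-abelian case $L'=Z(L)=\Phi(L)=N$: using $N\le\Phi(G)\cap L\le\Phi(L)$ from the previous step and the inductive fact that $(L/N)'=Z(L/N)=\Phi(L/N)$ (or $L/N$ is elementary abelian), standard $p$-group calculations — namely that $[x^p,y]\in L'$ and $(xy)^p\equiv x^py^p\pmod{L'}$ when $L/L'$ has exponent $p$ (odd $p$), with the familiar modification for $p=2$ yielding exponent at most $4$ — pin down $L'=Z(L)=\Phi(L)$ and the claimed exponent bound. The trickiest item is the identification $N=L'=Z(L)=\Phi(L)$ in the non-abelian case, where one has to rule out $N\lneq L'$ by exploiting that any proper $G$-invariant subgroup of $L$ strictly between $N$ and $L'$ would yield, via the quotient, a smaller-order instance contradicting minimality of the counterexample.
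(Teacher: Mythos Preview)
The paper does not prove this lemma at all: it is stated with a citation to \cite[Proposition~1]{Ballester-Bolinches-1996} and used as a black box, so there is no ``paper's own proof'' to compare your attempt against. Your task reduces to whether your sketch is a viable reconstruction of that external result.

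Your overall strategy --- show $G^{\FF}\le F(G)$, then induct on $|G|$ via a minimal normal $N\le G^{\FF}$, splitting on $N\le M$ versus $N\not\le M$ --- is the standard one and is essentially how the original Ballester-Bolinches--Pedraza-Aguilera argument runs. Two places in your lifting step deserve more care. First, your justification that $L=G^{\FF}$ is a $p$-group is circular as written: you invoke ``the minimality of $L$ as the $\FF$-residual'' to kill $L_{p'}$, but that is not a minimality you have set up. The clean argument is simply that $N\le L_p$ and, by induction, $L/N$ is a $q$-group; if $q\ne p$ then $L_p=N$ and the characteristic subgroup $L_q\unlhd G$ contains a minimal normal subgroup of $G$ to which the same dichotomy applies, forcing a contradiction. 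Second, the claim $N\le\Phi(G)$ in the reduction case is more delicate than you indicate: a complement $M_1$ to $N$ need not lie in $\FF$, so you are not literally ``in a variant of the easy case''. What one actually uses is that $G/N\in\FF$ would follow (since $L\le N$ would then hold), contradicting $N<L$; the correct route is to argue via $\FF$-abnormal maximal subgroups or to invoke that $N\le\Phi(G)$ follows from $N\le L'\le\Phi(L)\le\Phi(G)$ once the chief-factor statement for $L/L'$ is in hand. With those two points tightened, your outline matches the published proof.
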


  \begin{lemma}[{\cite[Theorem 7]{Ballester-2011}}]\label{second-maximal}
  	
  	Let $ G $ be a group whose Sylow $ p $-subgroups have order $ p^{2} $. Consider the quotient group $ G^{+}= G /O_{p'}( G ) $ and denote $ S = {\rm Soc}(G^{+}) $ and $ F = O^{p'}( G^{+})  $. Then one of the following holds:
  	
  	\vskip0.08in
  	
  	\noindent{\rm(1)} S is a chief factor of G. In this case, one of the following holds:
  	
  	{\rm(a)} $ S $ is isomorphic to a cyclic group of order $ p $, $ G $ is $ p $-supersoluble and $ F $ is isomorphic to a cyclic group of order $ p^{2} $.
  	
  	{\rm(b)} $ G^{+} $ is a primitive group of type $ 1 $, $ G $ is $ p $-soluble and $ F = S $ is isomorphic to an elementary abelian group of order $ p^{2} $.
  	
  	{\rm(c)} $ G^{+} $ is a primitive group of type $ 2 $ and $ F = S $ and either $ S $ is a product of two copies of a non-abelian simple group whose Sylow $ p $-subgroups have order $ p $ or $ S $ is a simple group whose Sylow $ p $-subgroups have order $ p^{2} $.
  	
  	{\rm(d)} $ G^{+} $ is a primitive group of type $ 2 $ and $ S < F $ and $ G^{+} $ is an almost-simple group such that $ S $ is a non-abelian simple group with Sylow $ p $-subgroups of order $ p $ and $ G^{+}/ S $ is a soluble group with Sylow $ p $-subgroups of order $ p $.
  	
  	\vskip0.08in
  	
  	\noindent{\rm(2)} $ S $ is the direct product of two distinct minimal normal subgroups of $ G^{+} $, $ N_{1} $ and $ N_{2} $ say. In this case, $ F = S $ and $ N_{1} $ and $ N_{2} $ are simple groups with cyclic Sylow $ p $-subgroups of order $ p $.
  \end{lemma}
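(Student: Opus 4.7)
The plan is to reduce to the case $O_{p'}(G) = 1$ by passing to $G^+ = G/O_{p'}(G)$, since the conclusion is formulated entirely in terms of $G^+$. After this reduction every minimal normal subgroup of $G^+$ has order divisible by $p$, so the constraint $|P| = p^2$ forces the socle $S$ to be the direct product of at most two minimal normal subgroups. This dichotomy is exactly Case (1) versus Case (2) of the statement.

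I would dispose of Case (2) first: two distinct minimal normal subgroups $N_1, N_2$ of $G^+$ each contribute at least $p$ to a Sylow $p$-subgroup of $G^+$, so each has Sylow $p$-subgroup of order exactly $p$; a characteristically simple group with cyclic Sylow $p$-subgroup of order $p$ is either cyclic of order $p$ or a non-abelian simple group with cyclic Sylow $p$-subgroup. The equality $F = S$ then follows because $|S|_p = p^2 = |G^+|_p$ forces $G^+/S$ to be a $p'$-group, so $F = O^{p'}(G^+) \leq S$, while $S$ is generated by $p$-elements so $S \leq F$. In Case (1) I would further split according to the structure of $S$: when $|S| = p$ the embedding of $G^+/C_{G^+}(S)$ into $\mathrm{Aut}(S)$ forces $p$-supersolubility, and the $p^2$ constraint then forces $F$ to be cyclic of order $p^2$, giving (a); elementary abelian $S$ of order $p^2$ gives (b) via primitive groups of type $1$; and when $S$ is non-abelian, subcases (c) and (d) separate according as $F = S$ (primitive type $2$) or $S < F$ (almost-simple).

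The main obstacle is the non-abelian subcases of Case (1). Here Lemma \ref{component} is the decisive tool: every component contributes a non-cyclic Sylow $p$-subgroup to $H/Z(H)$, and this, together with $|P| = p^2$, forces $E(G^+)$ to be either a single quasisimple factor with Sylow $p$-subgroup of order $p$ or $p^2$, or the product of two copies of a non-abelian simple group with Sylow $p$-subgroups of order $p$. Distinguishing $F = S$ from $S < F$ then comes down to locating the ``extra'' $p$-element of $P$ relative to $S$: if it lies inside $S$ we land in (c); otherwise $G^+/S$ must have a Sylow $p$-subgroup of order $p$, and the non-abelian cyclic-Sylow structure of $S$ forces $G^+$ to be almost-simple with soluble $G^+/S$ of Sylow order $p$, placing us in (d). Once the case work is organized around $F(G^+) = O_p(G^+)$ and $E(G^+)$, the classification falls out from the standard Aschbacher--O'Nan--Scott analysis for primitive groups with small $p$-part.
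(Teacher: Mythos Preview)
The paper does not give a proof of this lemma: it is cited as \cite[Theorem~7]{Ballester-2011} and used as an external input, so there is nothing in the paper to compare your proposal against.

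For what it is worth, your outline has the right overall shape, but your appeal to Lemma~\ref{component} in the non-abelian subcase of Case~(1) does not work as written. That lemma carries the hypothesis $F(G)=\mathrm{Soc}(G)$; when $S=\mathrm{Soc}(G^{+})$ is the unique minimal normal subgroup and is non-abelian, one has $O_{p}(G^{+})=1$ and hence $F(G^{+})=1\neq S$, so the hypothesis fails. Your use of the lemma is also internally inconsistent: you invoke its conclusion that each component has a \emph{non-cyclic} Sylow $p$-subgroup in $H/Z(H)$, yet in the next sentence allow ``two copies of a non-abelian simple group whose Sylow $p$-subgroups have order $p$'', which are cyclic. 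The lemma is in fact unnecessary here. A non-abelian minimal normal subgroup $S$ is a direct product of $k$ copies of a non-abelian simple group, each of order divisible by $p$ since $O_{p'}(G^{+})=1$; hence $p^{k}\mid |S|$ and $k\le 2$. The split into (c) versus (d) is then simply the dichotomy $|S|_{p}=p^{2}$ versus $|S|_{p}=p$, and in the latter case $C_{G^{+}}(S)=1$ (any nontrivial normal subgroup meeting $S$ trivially would produce a second minimal normal subgroup), so $G^{+}$ embeds in $\mathrm{Aut}(S)$ and is almost-simple, with $G^{+}/S$ soluble by the Schreier conjecture.
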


\begin{lemma}\label{isomorphic}
	Let $ P $ be a Sylow $ p $-subgroup of $ G $ with $ |P|=p^{3} $.
	Suppose that $ G $ is not  $ p $-soluble   and every  $ 2 $-maximal subgroups of $ P $ satisfies  the partial $ \Pi $-property in $ G $. Then $ p = 2 $ and $ P $ is  isomorphic to $ Q_{8} $.
\end{lemma}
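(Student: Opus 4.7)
The plan is to reduce the lemma to Theorem \ref{minamal}. Since $|P|=p^{3}$, the $2$-maximal subgroups of $P$ are exactly the subgroups of order $p$, which in a $p$-group coincide with the cyclic subgroups of prime order. Consequently, the hypothesis of the lemma says precisely that every cyclic subgroup of $P$ of prime order satisfies the partial $\Pi$-property in $G$.

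I argue by contradiction, first supposing that $P$ is quaternion-free. Then the clause in Theorem \ref{minamal} concerning cyclic subgroups of order $4$ is vacuous, so the full hypothesis of Theorem \ref{minamal} is met with $E = G$ (a normal subgroup of itself whose Sylow $p$-subgroup is $P$). Applying that theorem yields $G = E \leq Z_{\UU_{p}}(G)$, so every $G$-chief factor of $G$ is $\UU_{p}$-central. This forces $G$ to be $p$-supersoluble, hence $p$-soluble, contradicting the assumption.

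Therefore $P$ is not quaternion-free, i.e., $P$ contains a subgroup isomorphic to $Q_{8}$. Since $|Q_{8}|=8=2^{3}$ while $|P|=p^{3}$, this compels $p=2$ and $|P|=8$, and any subgroup of $P$ of order $8$ must coincide with $P$ itself. Hence $P\cong Q_{8}$, as required.

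The whole content of the argument is the identification of the $2$-maximal subgroups of an order-$p^{3}$ group with its subgroups of order $p$ together with a direct invocation of Theorem \ref{minamal}; I anticipate no real obstacle, since the quaternion-free case is precisely the case in which Theorem \ref{minamal} is triggered by the given data alone.
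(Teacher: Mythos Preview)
Your proof is correct and considerably more direct than the paper's. You exploit the coincidence that for $|P|=p^{3}$ the $2$-maximal subgroups of $P$ are exactly the cyclic subgroups of order $p$, which lets you feed the hypothesis straight into Theorem~\ref{minamal} (with $E=G$) whenever the order-$4$ clause is vacuous, i.e.\ whenever $P$ is quaternion-free; this yields $G\leq Z_{\UU_{p}}(G)$, hence $p$-solubility, and the contradiction. The remaining case forces $P$ to contain $Q_{8}$, and since $|P|=p^{3}$ this gives $p=2$ and $P\cong Q_{8}$.

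The paper instead argues essentially from scratch: after reducing to $O_{p'}(G)=1$ it shows every minimal normal subgroup of $G$ has order $p$, analyses the socle and the generalised Fitting subgroup via Lemma~\ref{component} to see that $P$ is non-cyclic, and then uses Theorem~\ref{maximal} together with Lemma~\ref{second-maximal} and a study of maximal normal subgroups of $G$ to rule out odd $p$ and to force $P$ to have a unique normal subgroup of index $p^{2}$, pinning it down as $Q_{8}$ by \cite[Kapitel III, Satz 8.2]{Huppert-1967}. Your route bypasses all of this structural work by leaning on the cited result Theorem~\ref{minamal}; the paper's longer argument has the compensating merit of staying within the $2$-maximal framework and of rehearsing techniques (the socle analysis, the use of Lemma~\ref{second-maximal}, the maximal-normal-subgroup trick) that are reused in the proof of Theorem~\ref{classification}.
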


\begin{proof}
	By Lemma \ref{over},  we may  assume  that $ O_{p'}(G) = 1 $. Hence, $ F(G) = O_{p}(G)  $. It is no loss to assume that $ O^{p'}(G)=G $. Since $ G $ is not  $ p $-soluble, we have that $ F(G)<P $.  Let $ K $ be a minimal normal subgroup of $ G $. Then $ 1 < P\cap K $. Suppose that $ |P\cap K |\geq p^{2} $ . Then there exists a $ 2 $-maximal normal  subgroup $ H $ of $ P $ such that $  H<P\cap K $. By hypothesis, $ H $ satisfies  the partial $ \Pi $-property in $ G $. There exists a chief series $$ \varGamma_{G}: 1 =G_{0} < G_{1} < \cdot\cdot\cdot < G_{n}= G $$ of $ G $ such that for every $ G $-chief factor $ G_{i}/G_{i-1} $ $ (1\leq i\leq n) $ of $ \varGamma_{G} $, $ | G : N _{G} (HG_{i-1}\cap G_{i})| $ is a $ p $-number. Note that there exists an integer $ j $ $ (1 \leq j \leq n) $ such that $ G_{j}=  G_{j-1} \times K $. It follows that $ | G  : N _{G} (HG_{j-1}\cap G_{j})| $ is a $ p $-number, and so $ HG_{j-1}\cap G_{j}\unlhd G $. This yields that $ HG_{j-1}\cap G_{j}=G_{j-1} $ or $  HG_{j-1}\cap G_{j}=G_{j} $. If $ HG_{j-1}\cap G_{j}=G_{j-1} $, then $ 1<H= H\cap G_{j}\leq G_{j-1} $, a contradiction. If $  HG_{j-1}\cap G_{j}=G_{j} $, then $ G_{j}\leq HG_{j-1} $, which contradicts the fact that  $ G_{j-1}H < G_{j-1}(P\cap K) \leq  G_{j} $. Therefore, $ |P\cap K|=p $. By hypothesis, $ P\cap K $ satisfies  the partial $ \Pi $-property in $ G $. Consider the $ G $-chief factor $ K/1 $. Applying Lemma \ref{pass}, we have that $ |G:N_{G}(P\cap K)| $ is a $ p $-number. It follows that $ P\cap K\unlhd G $, and thus $ P\cap K=K $.  Hence every minimal normal subgroup of $ G $ has order $ p $. In particular,  $ {\rm Soc}(G) \leq  O_{p}(G) = F(G) < P $.
	
	Assume that $ N $ is  a minimal normal subgroup of $ G $ such that $ N<F(G) $. Then $ |N|=p $.  Write $ M/N = O_{p'}(G/N ) $. Then $ P/N $ has order $ p^{2} $, $ O_{p'}(G/M) = 1 $ and $ F(G)M/M $ has order $ p $. By Lemma \ref{second-maximal}, we have that $ {\rm Soc}(G/M) = F(G)M/M \times A/M $, where $ A/M $  is a non-abelian simple
	group whose Sylow $ p $-subgroups have order $ p $. Then $ A $ is a non-$ p $-soluble group with Sylow $ p $-subgroups of order $ p^{2} $. Clearly, $ A $ is normal in $ G $,  $ O_{p'}(A) = 1 $. By Lemma \ref{second-maximal}, we have that $ {\rm Soc}(A) = N \times V $, where $ V $ is a non-abelian simple group with Sylow $ p $-subgroups of order $ p $. As a consequence,  the normal closure $ V^{G} $ of $ V $ in $ G $ is a minimal normal subgroup of $ G $, contrary to the fact that $ {\rm Soc}(G) $ is abelian.  Hence there exists a minimal normal subgroup $ L $ of $ G $ such that $ L={\rm Soc}(G)=O_{p}(G)=F(G) $.  If $ F^{*}(G) = F(G) $, then $ P \leq  C_{G}(F^{*}(G)) \leq F^{*}(G) = L $. This is impossible. Hence $ F(G ) < F^{*}(G) $. It means that $ E(G) $ is non-trivial and then $ L \leq E(G) $. Notice that  $ Z(E(G))>1 $, and we have  $ Z(E(G))=L $. By Lemma \ref{component}, for every component $ R $ of $ G $, $ p $ divides  $ |RN/N| $ and the Sylow
	$ p $-subgroups of $ RN/N $ are non-cyclic. Note that $ RN\unlhd \unlhd G $,  and hence a Sylow $ p $-subgroup $ P/N $ of $ RN/N $ is elementary abelian of order $ p^{2} $. In particular, $ P $ is not cyclic.  Let $ U $ be a maximal normal subgroup of $ G $. If $ |G/U|_{p}=p $, then $ U $ is $ p $-supersoluble by Theorem \ref{maximal}. Since $ O_{p'}(G)=1 $, it follows from \cite[Lemma 2.1.6]{Ballester-2010} that $ P\cap U\unlhd U $, and thus $ P\cap U\unlhd G $. Hence $ G $ is $ p $-soluble, a contradiction. Therefore, $ G/U $ is a non-abelian simple group whose Sylow $ p $-subgroups have  order $ p^{2} $. In particular, $ U $ is $ p $-soluble. Let $ X $ be a normal maximal subgroup of $ G $ which is different from $ U $. With a similar argument, we  can get that $ |G/X|_{p}=p^{2} $  and $ X $ is $ p $-soluble. Then $ G=UX $ is $ p $-soluble, a contradiction.  This shows that $ U $ is  the unique maximal normal subgroup of $ G $. Then  the chief factor $ G/U $ occurs in every chief series of $ G $. Assume that $ p>2 $.  If  $ L $ is the unique  $ 2 $-maximal subgroup of $ P $, then $ P $ is cyclic by \cite[Kapitel III, Satz 8.3]{Huppert-1967}, a contradiction. Hence there exists a $ 2 $-maximal subgroup $ T $ of $ P $ such that $ L\not =T $. Then $ T\nleq U $. By hypothesis, $ T $ satisfies  the partial $ \Pi $-property in $ G $. Consider the $ G $-chief factor $ G/U $.  Then $ |G:N_{G}(UT\cap G)|=|G:N_{G}(UT)| $ is a $ p $-number, and thus $ G=PN_{G}(UT) $.  Observe that $ (P\cap U)T=LT $ is a maximal subgroup of $ P $. Then $ UT=U(P\cap U)T $ is normalized by $ P $. It follows that $ U<UT\unlhd G $, and hence $ |G/U|=p $,  a contradiction. Therefore, we can assume that $ p=2 $. If $ P $ has a $ 2 $-maixmal subgroup $ S $ such that $ S\not =L  $, then $ |G/U|=p $ by a similar argument as above. This is a contradiction.  Hence $ L $ is the unique $ 2 $-maximal subgroup of $ P $. Since $ P $ is non-cyclic, it follows from  \cite[Kapitel III, Satz 8.2]{Huppert-1967} that $ P $ is  isomorphic to $ Q_{8} $, as desired. Our proof is now complete.
\end{proof}

  \begin{lemma}\label{order}
  	Let $ P $ be a Sylow  $ p $-subgroup of $ G $. If  $ P $ satisfies the partial $ \Pi  $-property in $ G $, then $ G $ is $ p $-soluble.
  \end{lemma}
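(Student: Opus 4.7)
The plan is to unpack the definition of the partial $\Pi$-property for $P$ itself, and combine it with the Frattini argument applied to each chief factor. The key observation is that for a Sylow $p$-subgroup $P$, the intersection $PG_{i-1}/G_{i-1} \cap G_{i}/G_{i-1}$ appearing in the definition is a Sylow $p$-subgroup of the chief factor $G_{i}/G_{i-1}$, which lets us force strong alternatives on each chief factor.

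First, by hypothesis I would fix a chief series $\varGamma_{G}: 1=G_{0}<G_{1}<\cdots<G_{n}=G$ witnessing that $P$ satisfies the partial $\Pi$-property in $G$. For a fixed $i$, set $\bar{G}=G/G_{i-1}$, $\bar{P}=PG_{i-1}/G_{i-1}$, and $\bar{N}=G_{i}/G_{i-1}$. Since $G_{i-1}\lhd G$, $\bar{P}$ is a Sylow $p$-subgroup of $\bar{G}$; and since $\bar{N}\lhd \bar{G}$, it follows that $\bar{P}\cap\bar{N}$ is a Sylow $p$-subgroup of $\bar{N}$. Applying the Frattini argument to the normal subgroup $\bar{N}$ and its Sylow $p$-subgroup $\bar{P}\cap\bar{N}$ gives $\bar{G}=\bar{N}\,N_{\bar{G}}(\bar{P}\cap\bar{N})$, so
\[
|\bar{G}:N_{\bar{G}}(\bar{P}\cap\bar{N})| = |\bar{N}:\bar{N}\cap N_{\bar{G}}(\bar{P}\cap\bar{N})|,
\]
and the right-hand side divides $|\bar{N}:\bar{P}\cap\bar{N}|$, which is a $p'$-number.

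Now I would invoke the partial $\Pi$-property: $|\bar{G}:N_{\bar{G}}(\bar{P}\cap\bar{N})|$ is a $\pi(\bar{P}\cap\bar{N})$-number. There are two mutually exclusive cases. If $\bar{P}\cap\bar{N}=1$, then $|\bar{N}|$ is a $p'$-number and the chief factor $G_{i}/G_{i-1}$ is a $p'$-group. Otherwise $\pi(\bar{P}\cap\bar{N})=\{p\}$, so the index is simultaneously a $p$-number and a $p'$-number, hence equal to $1$; thus $\bar{P}\cap\bar{N}$ is a nontrivial normal $p$-subgroup of $\bar{G}$ contained in the minimal normal subgroup $\bar{N}$. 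Minimality forces $\bar{P}\cap\bar{N}=\bar{N}$, and so $G_{i}/G_{i-1}$ is an elementary abelian $p$-group.

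In every case the chief factor $G_{i}/G_{i-1}$ is either a $p'$-group or a $p$-group, which is precisely the definition of $p$-solubility of $G$. There is no real obstacle here: the entire proof reduces to recognising that, for $H=P$ a full Sylow $p$-subgroup, the quantity $|HG_{i-1}/G_{i-1}\cap G_{i}/G_{i-1}|$ already controls the chief factor via Frattini, and the only flexibility the definition leaves is the dichotomy between $p$-chief factor and $p'$-chief factor.
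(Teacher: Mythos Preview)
Your proof is correct and follows essentially the same route as the paper: both fix the chief series witnessing the partial $\Pi$-property, observe that $PG_{i-1}/G_{i-1}\cap G_i/G_{i-1}$ is a Sylow $p$-subgroup of the chief factor, and use that its normalizer has $p$-power index to conclude each chief factor is a $p$-group or a $p'$-group. The only difference is one of detail: the paper simply notes that a chief factor whose Sylow $p$-subgroup has normalizer of $p$-power index is $p$-soluble, whereas you spell this out explicitly via the Frattini argument (equivalently, Sylow's theorem) to show the index is simultaneously a $p'$-number and hence equal to $1$.
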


  \begin{proof}
  	By hypothesis, $ P $ satisfies  the partial $ \Pi $-property in $ G $. There exists a chief series $$ \varGamma_{G}: 1 =G_{0} < G_{1} < \cdot\cdot\cdot < G_{n}= G $$ of $ G $ such that for every $ G $-chief factor $ G_{i}/G_{i-1} $ $ (1\leq i\leq n) $ of $ \varGamma_{G} $, $ | G/G_{i-1} : N _{G/G_{i-1}} (PG_{i-1}/G_{i-1}\cap G_{i}/G_{i-1})| $ is a $ p $-number. Then $ | G_{i}/G_{i-1} : N _{G_{i}/G_{i-1}} ((P\cap G_{i})/G_{i-1})| $ is a $ p $-number. Note that $ (P\cap G_{i})G_{i-1}/G_{i-1} $ is a Sylow $ p $-subgroup of $ G_{i}/G_{i-1} $, and so $ G_{i}/G_{i-1} $ is $ p $-soluble. Thus $ G $ is $ p $-soluble.
  \end{proof}

  If $ P $ is either an odd order $ p $-group or a quaternion-free $ 2 $-group, then we use $ \Omega(P) $ to denote the subgroup $ \Omega_{1} (P) $.
  Otherwise, $ \Omega (P) = \Omega_{2} (P) $.

  \begin{lemma}\label{hypercenter}
  	Let $ P $ be a normal $ p $-subgroup of $ G $ and $ D $ a Thompson critical subgroup of $ P $ \cite[page 186]{Gorenstein-1980}. If $ P/\Phi(P) \leq Z_{\UU}(G/\Phi(P)) $ or  $ \Omega(D) \leq Z_{\UU}(G) $, then $ P \leq  Z_{\UU}(G) $.
  \end{lemma}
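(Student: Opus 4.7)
The plan is to prove each of the two sufficient conditions separately, in both cases by induction on $|P|$, with the common goal of showing that every $G$-chief factor $H/K$ inside $P$ satisfies $|H/K|=p$ (which, for the saturated formation $\UU$, is exactly the condition of being $\UU$-central).

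For the hypothesis $P/\Phi(P)\le Z_\UU(G/\Phi(P))$, I would first invoke Gasch\"utz's theorem to obtain $\Phi(P)\le\Phi(G)$, so that the hypothesis immediately tells me every $G$-chief factor lying strictly between $\Phi(P)$ and $P$ has order $p$. I then pick a minimal normal subgroup $L$ of $G$ contained in $P$, which is elementary abelian. If $L\not\le\Phi(P)$, minimality gives $L\cap\Phi(P)=1$, so $L\cong L\Phi(P)/\Phi(P)$ appears as a $G$-chief factor inside $P/\Phi(P)$, forcing $|L|=p$; passing to $(G/L,P/L)$ the hypothesis is inherited via $\Phi(P/L)=\Phi(P)/L$, and induction finishes. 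The delicate subcase is $L\le\Phi(P)\le\Phi(G)$: here I still mod out by $L$ and use induction to conclude $P/L\le Z_\UU(G/L)$, and then appeal to the saturation of $\UU$ together with its local-formation description (a $p$-chief factor is $\UU$-central iff $|H/K|=p$ and $G/C_G(H/K)$ is abelian of exponent dividing $p-1$) to force $|L|=p$.

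For the hypothesis $\Omega(D)\le Z_\UU(G)$, I would use that $D$, and hence $\Omega(D)$, is characteristic in $P$ and therefore normal in $G$. The two classical properties of a Thompson critical subgroup from Gorenstein \cite{Gorenstein-1980} are $C_P(D)\le Z(D)$ and, crucially, that any $p'$-group of operators on $P$ centralizing $\Omega(D)$ must centralize all of $P$. For any $G$-chief factor $H/K$ with $H\le P$, the $p'$-part of $G/C_G(H/K)$ acts faithfully on $H/K$, and the critical-subgroup property lets me read this action off from the action on $\Omega(D)$; since the latter is supersoluble by hypothesis, so is the former, and $|H/K|=p$ follows.

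The hardest step is the subcase $L\le\Phi(P)$ in the first implication: no hypothesis directly constrains chief factors below $\Phi(P)$, so the argument must leverage the saturation of $\UU$ (i.e., $G\in\UU$ iff $G/\Phi(G)\in\UU$) via the local-formation characterization to force $|L|=p$ from the supersoluble $G$-action prescribed on $P/\Phi(P)$.
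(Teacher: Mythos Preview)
The paper does not actually prove this lemma; it simply cites \cite[Lemma 2.8]{Chen-xiaoyu-2016}. Your outline follows the standard route, but two steps deserve tightening.

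For the first implication, you correctly isolate the subcase $L\le\Phi(P)$ as the crux, and your diagnosis that one must ``force $|L|=p$ from the supersoluble $G$-action prescribed on $P/\Phi(P)$'' is exactly right. The tool, however, is not saturation of $\UU$ but the Burnside fact that a $p'$-automorphism of $P$ trivial on $P/\Phi(P)$ is trivial on $P$; equivalently, $C_G(P/\Phi(P))/C_G(P)$ is a $p$-group. Combined with the local definition $f(p)=\{\text{abelian groups of exponent dividing }p-1\}$ of $\UU$, this shows directly that $G/C_G(H/K)$ is abelian of exponent dividing $p-1$ for \emph{every} $G$-chief factor $H/K$ inside $P$, whence $|H/K|=p$. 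This disposes of both your cases at once and makes the induction unnecessary; note that your inductive conclusion $P/L\le Z_\UU(G/L)$ alone does not give $|L|=p$ without this Burnside step.

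For the second implication, you assert that ``any $p'$-group of operators on $P$ centralising $\Omega(D)$ must centralise all of $P$'' as though it were part of Thompson's critical-subgroup theorem. Thompson only guarantees faithfulness of the $p'$-action on $D$; the passage from $\Omega(D)$ to $D$ is a separate result. For $p$ odd it follows from \cite[Chapter 5, Theorem 3.10]{Gorenstein-1980} (a $p'$-automorphism of a $p$-group trivial on $\Omega_1$ is trivial), but for $p=2$ that statement fails---$Q_8$ is the standard counterexample---which is precisely why the paper defines $\Omega(D)=\Omega_2(D)$ when $D$ is not quaternion-free. You need to treat $p=2$ separately and explain why, with this tailored definition of $\Omega$, the $2'$-action on $D$ is still determined by its restriction to $\Omega(D)$.
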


  \begin{proof}
  	By \cite[Lemma 2.8]{Chen-xiaoyu-2016}, the conclusion follows.
  \end{proof}

  	
  	
  	

 \begin{lemma}[{\cite[Lemma 2.10]{Chen-xiaoyu-2016}}]\label{critical}
 	Let $ D $ be a Thompson critical subgroup of a non-trivial $ p $-group $ P $.
 	
 	\vskip0.08in
 	
 	\noindent{\rm (1)} If $ p > 2 $, then the exponent of $ \Omega_{1}(D) $ is $ p $.
 	
 	\noindent{\rm (2)} If $ P $ is an abelian $ 2 $-group, then the exponent of $ \Omega_{1}(D) $ is $ 2 $.
 	
 	\noindent{\rm (3)} If $ p = 2 $, then the exponent of $ \Omega_{2}(D) $ is at most $ 4 $.
 \end{lemma}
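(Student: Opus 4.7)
The plan is to invoke the standard structural properties of a Thompson critical subgroup $D \leq P$: namely, $\Phi(D) \leq Z(D)$, $[P,D] \leq Z(D)$, and $C_P(D) = Z(D)$. In particular, $D' \leq \Phi(D) \leq Z(D)$, so $D$ has nilpotency class at most $2$, and (using $\Phi(D) = D^p D'$) one also has $D^p \leq Z(D)$ whenever $p$ is odd, and $D^2 \leq Z(D)$ when $p = 2$. The principal computational tool is the class-$2$ identity
\[
(xy)^n \;=\; x^n\, y^n\, [y,x]^{\binom{n}{2}} \qquad (x,y \in D),
\]
proved by straightforward induction on $n$ using the fact that $[y,x]$ is central in $D$ and the related commutator identity $[y^n, x] = [y,x]^n$. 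With these tools, proving each clause reduces to showing that the relevant set $\{x \in D : x^{p^i} = 1\}$ is closed under products.

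For part (1), suppose $p > 2$ and take $x, y \in \Omega_1(D)$. From $x^p = 1$ and the class-$2$ identity $[y, x^p] = [y,x]^p$, I obtain $[y,x]^p = 1$. The displayed formula then gives $(xy)^p = [y,x]^{p(p-1)/2}$, which is trivial because $p \mid \binom{p}{2}$ for odd $p$. Hence $\Omega_1(D)$ is a subgroup of exponent $p$. Part (2) is immediate: if $P$ is an abelian $2$-group then so is $D$, and $\Omega_1(D) = \{x \in D : x^2 = 1\}$ is manifestly a subgroup of exponent $2$.

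For part (3), with $p = 2$ and $x, y \in \Omega_2(D)$, the crucial observation is that $x^2 \in D^2 \leq Z(D)$, so $[y, x^2] = 1$. The class-$2$ identity $[y, x^2] = [y,x]^2$ then forces $[y,x]^2 = 1$, and the displayed formula with $n = 4$ collapses to
\[
(xy)^4 \;=\; x^4 y^4 [y,x]^6 \;=\; ([y,x]^2)^3 \;=\; 1.
\]
Thus $\Omega_2(D)$ has exponent at most $4$.

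The main obstacle here is really just bookkeeping: once one is comfortable with the standard Thompson critical subgroup facts (class $\leq 2$ and $\Phi(D) \leq Z(D)$), each case reduces to a one-line computation via the Hall--Petresco style identity. The only genuine subtlety is in (3), where one needs the prime to be $2$ precisely so that \emph{squares}, rather than only $p$-th powers, land in $Z(D)$; this is the single input that distinguishes the bound $4$ on $\Omega_2$ from the bound $p$ on $\Omega_1$ seen in (1).
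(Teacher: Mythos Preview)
The paper does not supply its own proof of this lemma; it is merely quoted from \cite{Chen-xiaoyu-2016}, so there is no argument in the present paper to compare against.

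Your proof is correct. The standard structural facts about a Thompson critical subgroup that you invoke---class at most $2$ and $\Phi(D)\le Z(D)$ (equivalently, $D/Z(D)$ elementary abelian)---are precisely the content of Gorenstein's theorem cited on \cite[page 186]{Gorenstein-1980}, so in particular $D^{p}\le Z(D)$ for every prime $p$. The class-$2$ identity $(xy)^{n}=x^{n}y^{n}[y,x]^{\binom{n}{2}}$ together with $[y,x^{n}]=[y,x]^{n}$ then handles all three parts exactly as you indicate: in (1) one uses $p\mid\binom{p}{2}$ for odd $p$ after first deducing $[y,x]^{p}=1$ from $x^{p}=1$; part (2) is trivial; and in (3) the key point is that $x^{2}\in Z(D)$ forces $[y,x]^{2}=1$, whence $(xy)^{4}=[y,x]^{6}=1$. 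Your closing remark correctly isolates why the argument for (3) needs the full strength of $\Phi(D)\le Z(D)$ rather than merely class $\leq 2$.
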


  \begin{lemma}[{\cite[Lemma 3.1]{Ward}}]\label{charcteristic}
  	Let $ P $ be a non-abelian quaternion-free $ 2 $-group. Then $ P $ has a characteristic subgroup of index $ 2 $.
  \end{lemma}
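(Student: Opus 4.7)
The plan is to argue by induction on $|P|$. The base case $|P|=8$ is immediate: the two non-abelian groups of order $8$ are $D_8$ and $Q_8$, and quaternion-freeness forces $P\cong D_8$, whose rotation subgroup $\langle r\rangle$ of order $4$ is the unique cyclic subgroup of index $2$ and is therefore characteristic.

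For the inductive step with $|P|>8$, the first move is to examine the abelianization $A = P/P'$. If $A$ is cyclic, then it has a unique subgroup of index $2$, which is characteristic in $A$; its preimage in $P$ is a characteristic subgroup of index $2$, since $P'$ is characteristic in $P$. This handles the ``easy'' case in one line.

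If $A$ is non-cyclic, the strategy is to pass to a proper characteristic quotient. Since $P$ is a non-abelian nilpotent $2$-group, the last non-trivial term of the lower central series lies in $Z(P)\cap P'$, so this intersection is a non-trivial characteristic subgroup of $P$. When $P$ has nilpotency class $\geq 3$, one selects a non-trivial characteristic subgroup $N \leq Z(P)\cap P'$ chosen so that $P/N$ remains non-abelian and quaternion-free; the induction hypothesis then produces a characteristic subgroup of index $2$ in $P/N$, and lifting gives one in $P$. In the class-$2$ case, where $P'\leq Z(P)$, the commutator map $P/Z(P)\times P/Z(P)\to P'$ is an $\mathrm{Aut}(P)$-invariant alternating bilinear form, and one would combine this with the quaternion-free hypothesis to exhibit a canonical non-zero linear functional $P/\Phi(P)\to\mathbb{F}_2$, equivalently a characteristic hyperplane of $P/\Phi(P)$.

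I expect the main obstacle to be justifying that a suitable $N$ can always be chosen so that $P/N$ stays quaternion-free, and, correspondingly, handling the ``minimal'' configurations in which every reduction creates a $Q_8$-section. Here one must argue directly that $\mathrm{Aut}(P)$ cannot act transitively on the non-zero vectors of $V := P/\Phi(P)$. The guiding observation is that transitivity on $V\setminus\{0\}$ is the genuine obstruction: it is realized for $P=Q_8$, where $\mathrm{Aut}(Q_8)\cong S_4$ acts as the full $\mathrm{GL}_2(\mathbb{F}_2)$ on $Q_8/\Phi(Q_8)$. The job is to show that whenever this transitive action occurs, the pullback of an orbit-invariant structure (e.g.\ the squaring map $v\mapsto \tilde v^{\,2}\in\Phi(P)$ read modulo a suitable characteristic subgroup of $\Phi(P)$) forces a $Q_8$-subgroup inside $P$, contradicting the hypothesis. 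This final step — translating an orbit-theoretic statement on $V$ into the existence of a concrete $Q_8$-subgroup — is the heart of the matter.
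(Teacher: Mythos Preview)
The paper does not supply a proof of this lemma; it is quoted from Ward without argument, so there is nothing in-paper to compare your proposal against.

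Judged on its own, your proposal is a plan, not a proof: you yourself label the decisive step---extracting a $Q_{8}$-subgroup from a transitive $\mathrm{Aut}(P)$-action on the nonzero vectors of $P/\Phi(P)$---as ``the heart of the matter'' and leave it undone. Both preliminary reductions are likewise unfinished. In the class $\geq 3$ route you need $P/N$ to remain non-abelian and quaternion-free, and you rightly flag this as the main obstacle without resolving it; the obstacle is genuine, because quaternion-freeness is not inherited by quotients. For instance $G=\langle a,b\mid a^{4}=b^{4}=1,\ bab^{-1}=a^{-1}\rangle$ of order $16$ has $G'=\langle a^{2}\rangle$, so every commutator lies in $\{1,a^{2}\}$ and only four elements square to $a^{2}$, which rules out any $Q_{8}$-subgroup; yet $G/\langle a^{2}b^{2}\rangle\cong Q_{8}$. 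In the class-$2$ route you only assert that a canonical nonzero functional on $P/\Phi(P)$ exists; the natural candidate is the squaring map $x\mapsto x^{2}\pmod{P'}$, which is a homomorphism in class $2$, but showing it is nonzero precisely under the quaternion-free hypothesis (it vanishes for $Q_{8}$ itself) is essentially the content of Ward's argument, and you have not supplied it. The orbit-theoretic reformulation in your last paragraph is the correct viewpoint, but executing it is the entire lemma.
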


  \begin{lemma}[{\cite[page 26]{Berkovich}}]\label{non-abelian}
  	A non-abelian $ p $-group $ G $ has only one normal subgroup of index $ p^{2} $ if and only if
  	$ G'=\Phi(G) $ is of index $ p^{2} $ in $ G $. In particular, a non-abelian $ 2 $-group has only one
  	normal subgroup of index $ 4 $ if and only if it is dihedral, semidihedral or generalized quaternion.
  \end{lemma}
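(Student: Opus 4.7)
The plan is to reduce the question to counting subgroups of index $p^{2}$ in the abelianization $G/G'$. Since every group of order $p^{2}$ is abelian, any normal subgroup $N\unlhd G$ with $|G/N|=p^{2}$ contains $G'$; conversely, every subgroup of $G/G'$ of index $p^{2}$ pulls back to a normal subgroup of $G$ of index $p^{2}$. Hence normal subgroups of $G$ of index $p^{2}$ correspond bijectively to subgroups of the abelian $p$-group $G/G'$ of index $p^{2}$.

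Next I would show that for a non-abelian $p$-group $G$ the quotient $G/G'$ is non-cyclic: since $G'\leq\Phi(G)$, the Frattini quotient $G/\Phi(G)$ is a quotient of $G/G'$, and were $G/G'$ cyclic the Burnside basis theorem would force $G$ to be cyclic, contradicting the hypothesis. I then claim that a non-cyclic abelian $p$-group $A$ has a unique subgroup of index $p^{2}$ if and only if $|A|=p^{2}$. One direction is immediate. For the other, assuming $|A|\geq p^{3}$, I would decompose $A=\mathbb{Z}_{p^{a_{1}}}\times\cdots\times\mathbb{Z}_{p^{a_{k}}}$ with $k\geq 2$ and exhibit two distinct subgroups of index $p^{2}$: when $a_{1}\geq 2$, the groups $p^{2}\mathbb{Z}_{p^{a_{1}}}\times\mathbb{Z}_{p^{a_{2}}}\times\cdots$ and $p\mathbb{Z}_{p^{a_{1}}}\times p\mathbb{Z}_{p^{a_{2}}}\times\mathbb{Z}_{p^{a_{3}}}\times\cdots$ both have index $p^{2}$ and differ in the second coordinate, while if $a_{1}=1$ (so $A$ is elementary abelian of rank at least $3$) any two distinct codimension-two subspaces suffice. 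Combining these steps, $G$ has a unique normal subgroup of index $p^{2}$ iff $|G/G'|=p^{2}$; and in that case $G/G'\cong\mathbb{Z}_{p}\times\mathbb{Z}_{p}$ is elementary abelian, so $\Phi(G)/G'=\Phi(G/G')=1$, giving $G'=\Phi(G)$ of index $p^{2}$.

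For the ``in particular'' clause about $2$-groups, the part just proved reduces the problem to characterising non-abelian $2$-groups $G$ with $|G:G'|=4$. I would invoke the classical classification of $2$-groups of maximal class (see e.g.\ \cite[Kapitel III]{Huppert-1967}): the condition $|G:G'|=4$ forces the nilpotency class of $G$ to equal $\log_{2}|G|-1$, and the non-abelian $2$-groups of maximal class are precisely the dihedral, semidihedral, and generalized quaternion $2$-groups. This classification is the main obstacle; the abelianization bookkeeping above is elementary, but pinning down exactly which non-abelian $2$-groups satisfy $|G:G'|=4$ rests on this non-trivial structural theorem, typically proved by exhibiting a cyclic maximal subgroup and then analysing the possible extensions.
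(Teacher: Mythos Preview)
The paper does not supply its own proof of this lemma; it is quoted directly from \cite{Berkovich}. Your argument is correct and is essentially the standard one found there: normal subgroups of index $p^{2}$ in $G$ correspond bijectively to subgroups of index $p^{2}$ in $G/G'$, the abelianization of a non-abelian $p$-group is non-cyclic, and a non-cyclic abelian $p$-group has a unique subgroup of index $p^{2}$ only when it already has order $p^{2}$. The identification $G'=\Phi(G)$ then follows because $G/G'$ is elementary abelian of order $p^{2}$, so $\Phi(G/G')=1$.

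For the $2$-group clause you correctly reduce to Taussky's theorem: a non-abelian $2$-group satisfies $|G:G'|=4$ if and only if it is of maximal class, and the $2$-groups of maximal class are precisely the dihedral, semidihedral, and generalized quaternion groups. This is indeed the non-elementary ingredient, and your pointer to \cite[Kapitel III]{Huppert-1967} is appropriate. One minor remark on your case analysis for the abelian group $A$: you implicitly order the invariants so that $a_{1}$ is largest (otherwise $a_{1}=1$ does not force $A$ to be elementary abelian); stating this convention explicitly would remove any ambiguity.
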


  \begin{lemma}[{\cite[Lemma 2.6(i)]{guo2017conditions}}]\label{N-cyclic}
  	If a group  $ P $ is dihedral, semidihedral or generalized quaternion, then $ P $ has  only  one  normal subgroup $ N $ of order $ 2^{n} $ for every $ 1<2^{n}<|P|/2 $ and $ N $ is cyclic, where $ n $ is a positive integer.
  \end{lemma}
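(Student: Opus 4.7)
The plan is to reduce the lemma to the fact that $P$ has a unique maximal cyclic subgroup $C$ of index $2$, and then to show that every normal subgroup of $P$ of order strictly less than $|P|/2$ must be contained in $C$. Since the subgroups of a cyclic group are determined by their order, uniqueness and cyclicity of $N$ follow at once.

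Concretely, write $|P| = 2^{m}$ and treat the three families uniformly via a presentation
\[
P = \langle r, s \mid r^{2^{m-1}} = 1,\ s^{2} = r^{\varepsilon \cdot 2^{m-2}},\ srs^{-1} = r^{t}\rangle,
\]
where $C := \langle r \rangle$ is cyclic of order $2^{m-1}$; here $t = -1$ in the dihedral and generalized quaternion cases and $t = 2^{m-2}-1$ in the semidihedral case, with $\varepsilon = 1$ only for generalized quaternion. Every element of $P \setminus C$ then has the form $r^{i}s$.

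The main step, and the one carrying essentially all the content, is the following claim: if a normal subgroup $N$ of $P$ contains some $r^{i}s \notin C$, then $|N| \geq |P|/2$. The commutator identity
\[
[r,\ r^{i}s] = r(r^{i}s)r^{-1}(r^{i}s)^{-1} = r^{1-t}
\]
shows that $r^{1-t} \in N$. In the dihedral and quaternion cases this element is $r^{2}$; in the semidihedral case it is $r^{2-2^{m-2}}$, whose $2$-adic valuation is exactly $1$ because $m \geq 4$ forces $1 - 2^{m-3}$ to be odd. Hence in all three cases $\langle r^{2} \rangle \leq N$, and since $r^{i}s \notin \langle r^{2} \rangle$ the subgroup $\langle r^{2}, r^{i}s\rangle$ has order $2 \cdot 2^{m-2} = |P|/2$, forcing $|N| \geq |P|/2$.

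The hypothesis $|N| = 2^{n} < |P|/2$ thus forces $N \leq C$, and then $N$ is the unique subgroup of the cyclic group $C$ of order $2^{n}$, hence cyclic and unique as a normal subgroup of $P$ of that order. I expect the only real obstacle to be the $2$-adic valuation bookkeeping in the semidihedral case (and the small-$m$ boundary cases $m=3$, where there is only one admissible value $2^{n}=2$ and the center does the job); the rest is a uniform application of the defining conjugation relations.
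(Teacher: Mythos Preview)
Your argument is correct. The commutator computation $[r,r^{i}s]=r^{1-t}$ is valid, and in all three families the element $r^{1-t}$ generates $\langle r^{2}\rangle$: for $t=-1$ this is immediate, and for the semidihedral parameter $t=2^{m-2}-1$ one has $1-t=2-2^{m-2}=2(1-2^{m-3})$ with the parenthesised factor odd once $m\geq 4$, which is exactly the range where semidihedral groups exist. Hence any normal subgroup meeting $P\setminus C$ contains $\langle r^{2}\rangle$ together with an element outside it, forcing order at least $2^{m-1}=|P|/2$; the contrapositive confines $N$ to the cyclic subgroup $C$, where uniqueness and cyclicity are automatic. You might add one sentence observing that each such subgroup of $C$ is characteristic in $C\unlhd P$ and therefore normal in $P$, so that a normal subgroup of the required order actually exists; as written you establish uniqueness but tacitly assume existence.

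As for comparison with the paper: Lemma~\ref{N-cyclic} is quoted from \cite[Lemma~2.6(i)]{guo2017conditions} and is not proved in the present paper, so there is no in-text argument to compare against. Your self-contained derivation via the uniform presentation and a single commutator identity is a clean, elementary route; the alternative one typically sees in the literature is to invoke the classification of maximal subgroups of these $2$-groups (every maximal subgroup other than $C$ is itself dihedral/semidihedral/quaternion of half the order, and an easy induction then pins down the normal subgroup lattice). Your approach avoids that induction at the cost of the small $2$-adic bookkeeping you already flagged.
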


 \begin{lemma}\label{if-only}
	Suppose that $ P $ is a normal $ p $-subgroup of $ G $. Then $ P $ is contained in the  hypercenter $ Z_{\infty}(G) $ of  $ G $  if and only if $ O^{p}(G) \leq C_{G}(P) $.
\end{lemma}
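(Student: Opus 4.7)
My plan is to translate the containment $P\le Z_\infty(G)$ into the equivalent statement that every $G$-chief factor of $G$ lying below $P$ is $G$-central, then analyse the action of $G$ on such factors. Since $P$ is a normal $p$-subgroup, any $G$-chief series can be refined through $P$ so that the chief factors inside $P$ are elementary abelian $p$-groups on which $G$ acts. The whole argument is driven by the classical fact that the subgroup of $\mathrm{Aut}(P)$ stabilising a normal series of a $p$-group $P$ is itself a $p$-group, equivalently, that a $p$-group acting on a non-trivial $\mathbb{F}_p$-vector space has a non-zero fixed point.

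For the forward direction, I would assume $P\le Z_\infty(G)$. By definition of the hypercenter, $P\le Z_n(G)$ for some $n$, and so one can build a $G$-chief series of $G$ refining $1<P<G$ whose factors inside $P$ are all $G$-central; in particular $G$ centralises each factor of this chain. Then $G/C_G(P)$ embeds into the stability group of a normal series of the $p$-group $P$, which is a $p$-group. Thus $G/C_G(P)$ is a $p$-group, so $O^p(G)\le C_G(P)$.

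For the converse, assume $O^p(G)\le C_G(P)$, i.e.\ $G/C_G(P)$ is a $p$-group. Refine $1<P<G$ to a $G$-chief series, and let $A/B$ be any chief factor of $G$ with $B<A\le P$. Then $A/B$ is an elementary abelian $p$-group, viewed as a simple $\mathbb{F}_p[G]$-module; the action of $G$ factors through $G/C_G(P)$ because $C_G(P)$ clearly centralises $A/B$. Since $G/C_G(P)$ is a $p$-group acting on a non-trivial $\mathbb{F}_p$-module, the fixed-point set is a non-zero $G$-submodule, and simplicity forces it to be the whole of $A/B$. Hence every $G$-chief factor below $P$ is central in $G$, so $P\le Z_\infty(G)$.

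There is no substantial obstacle here; the proof is a clean application of two standard facts (stability groups of $p$-group series are $p$-groups, and $p$-group actions on $\mathbb{F}_p$-modules have fixed points). The only minor care needed is to ensure one refines any chosen normal series of $G$ through $P$, which is always possible because $P\trianglelefteq G$, and to verify that $C_G(P)$ does act trivially on every chief factor contained in $P$ (immediate from $A\le P$).
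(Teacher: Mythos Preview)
Your proof is correct and follows essentially the same line as the paper's. For the forward direction the paper invokes \cite[Chapter 5, Theorem 3.2]{Gorenstein-1980} (a $p'$-group centralising every factor of a normal series of a $p$-group centralises the $p$-group), which is exactly the coprime/stability-group fact you use; for the converse the paper simply cites \cite[page 220, Theorem 6.3]{Deskins-1982}, whereas you supply the standard fixed-point argument for a $p$-group acting on an $\mathbb{F}_p$-module. The mathematical content is the same; your version is just more self-contained.
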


\begin{proof}
	If $ P\leq Z_{\infty}(G) $, then $ C_{G}(L/K)=G $ for any chief factor $ L/K $ of $ G $  below $ P $.  In particular, every $ p' $-subgroup of $ G $ centralizes $ L/K $. By \cite[Chapter 5, Theorem 3.2]{Gorenstein-1980}, every $ p' $-subgroup of $ G $ centralizes $ P $,  and thus  $ O^{p}(G)\leq C_{G}(P) $.

	Conversely, if $ O^{p}(G) \leq C_{G}(P) $, then $ |G/C_{G}(P)| $ is a power of $ p $. By \cite[page 220, Theorem 6.3]{Deskins-1982}, we have $ P\leq Z_{\infty}(G) $.
\end{proof}



  \section{Proofs}

  In order to prove Theorem \ref{2-mini} and Theorem \ref{2-maxi}, we shall prove a series of results at first.

  \begin{theorem}\label{2-minimal}
  	Let $ P $ be a Sylow $ p $-subgroup of a  $ p $-soluble group $ G $.  Assume that every $ 2 $-minimal subgroup of $ P $ satisfies the partial $ \Pi $-property in $ G $.  Then the $ p $-length of $ G $ is at most $ 1 $.
  \end{theorem}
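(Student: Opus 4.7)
The plan is to proceed by induction on $|G|$, taking $G$ a minimal counterexample (the case $|P|\le p$ is trivial for $p$-soluble $G$, so I assume $|P|\ge p^{2}$).

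First I reduce to $O_{p'}(G)=1$ via Lemma \ref{over}: each $2$-minimal subgroup $H$ of $P$ is a $p$-group, hence coprime in order to $O_{p'}(G)$, so $HO_{p'}(G)/O_{p'}(G)$ satisfies the partial $\Pi$-property in $G/O_{p'}(G)$, and $PO_{p'}(G)/O_{p'}(G)\cong P$ induces a bijection between $2$-minimal subgroups. By minimality $G/O_{p'}(G)$ has $p$-length at most $1$, hence so does $G$, a contradiction. With $O_{p'}(G)=1$, $p$-solubility yields $Q:=O_{p}(G)\ne 1$ and $C_{G}(Q)\le Q$, and the theorem reduces to showing $P=Q$.

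Suppose $Q<P$. I take a minimal normal subgroup $N$ of $G$; necessarily $N\le Q$ is elementary abelian. I try to exploit $G/N$ by induction. The hypothesis does not always pass cleanly, because a $2$-minimal subgroup of $P/N$ need not be of the form $HN/N$ for any $2$-minimal $H$ of $P$; in the cases where it does pass, minimality gives that $G/N$ has $p$-length at most $1$, and a Schur--Zassenhaus and Frattini argument on the pullback of $O_{p'}(G/N)$, together with $O_{p'}(G)=1$, lifts this back to $G$, a contradiction. In the remaining configurations I argue directly: pick $x\in P\setminus Q$ with $x^{p}\in Q$, and exploit that any $p$-group acts on a nontrivial $p$-group with nontrivial fixed points to produce $y\in C_{Q}(x)$ of order $p$, so $H=\langle x,y\rangle$ is a $2$-minimal elementary abelian subgroup of $P$. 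The partial $\Pi$-property of $H$ yields a chief series of $G$ in which, for each chief factor $A/B$ inside $Q$, $|G:N_{G}(HB\cap A)|$ is a $p$-number (when $H\le Q$ this is Lemma \ref{pass}, and otherwise we track $HB\cap A$ directly from the definition), while the chief factors above $Q$ constrain the action of $G/Q$ on $Q$. Combined with $C_{G}(Q)\le Q$ and the faithful action of $G/Q$ on $Q$, this forces $x\in C_{G}(Q)\le Q$, contradicting $x\notin Q$.

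The main obstacle is applying the partial $\Pi$-property to $2$-minimal subgroups $H$ that straddle $Q$, i.e.\ with $1<H\cap Q<H$. Neither Lemma \ref{pass} (which requires $H\le N\unlhd G$) nor Lemma \ref{over} (which requires a coprimality hypothesis) applies directly in that case, so one must work with the defining chief series of $G$ and carefully track the intersections $HG_{i-1}\cap G_{i}$ through chief factors both inside and above $Q$. Pinning these down so as to force $x\in C_{G}(Q)$ is the technical core of the proof.
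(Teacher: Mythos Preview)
Your proposal is not yet a proof: you explicitly identify the ``technical core'' (forcing $x\in C_G(Q)$ from the partial $\Pi$-property of a single $H=\langle x,y\rangle$ straddling $Q$) and then leave it open. In fact this step does not go through as you describe. Write $Q=O_p(G)$; since $y\in Q$ and $x\notin Q$, for any chief factor $A/B$ with $A\le Q$ one has $HB\cap A=\langle y\rangle B\cap A$, so the partial $\Pi$-property of $H$ gives information only about $\langle y\rangle$ below $Q$, not about $x$. Above $Q$, the first chief factors are $p'$-groups (because $O_p(G/Q)=1$), so the chief factor in which $x$ first appears sits over some $B$ with $Q<B$, and the conclusion you obtain is merely that $|G:N_G(\langle x\rangle B)|$ is a $p$-number for that particular $B$. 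This is very far from $x\in C_G(Q)$: it says nothing about how $x$ acts on $Q$. Your induction via $G/N$ is likewise incomplete, since a $2$-minimal subgroup of $P/N$ need not arise as $HN/N$ for a $2$-minimal $H\le P$, and you do not delimit the ``remaining configurations''. (A minor point: $H=\langle x,y\rangle$ need not be elementary abelian, since you only assumed $x^p\in Q$.)

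The paper proceeds quite differently. It uses that the class $\mathcal F$ of $p$-soluble groups of $p$-length at most~$1$ is a saturated formation, and first shows via Lemma~\ref{subgroup} that \emph{every} proper subgroup of the minimal counterexample $G$ lies in $\mathcal F$. This, together with the Ballester-Bolinches--Pedraza-Aguilera result (Lemma~\ref{important}), pins down $G^{\mathcal F}$: one shows $G^{\mathcal F}$ is a minimal normal subgroup of order exactly $p^{2}$. The contradiction is then obtained not by looking at a generic $x\notin Q$, but by manufacturing specific $2$-minimal subgroups of the form $\langle a,b\rangle$ with $a\in G^{\mathcal F}\cap Z(P)$ and $b$ chosen either in a complement $M$ or in a second minimal normal $T\le\mathrm{Core}_G(M)$, and then applying Lemma~\ref{pass} to a chief series through $G^{\mathcal F}T$ to force $|G^{\mathcal F}|=p$ or a normality that contradicts minimality. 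The formation-theoretic reduction to $|G^{\mathcal F}|=p^{2}$ is what makes the chief-series bookkeeping tractable; your approach lacks an analogue of this reduction.
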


  \begin{proof}
  	Assume  that the theorem is not true and $ G $ is a counterexample of  minimal  order. Let $ \FF $ denote the class of all $ p $-soluble groups whose $ p $-length is at most $ 1 $. Then  $ \FF $ is a saturated formation by \cite[Chapter IV, Examples 3.4]{Doerk-Hawkes}. We divide the proof into the following steps.
  	
  	\vskip0.1in
  	
  	\noindent\textbf{Step 1.} $ O_{p'}(G) = 1 $ and $ O^{p'}(G) = G $.
  	
  	\vskip0.1in
  	
  	Applying Lemma \ref{over}, $ G/O_{p'}(G) $ satisfies  the hypotheses of the theorem. If $ |G/O_{p'}| < |G| $, it  follows that $  G/O_{p'} \in \FF $, and thus $ G \in \FF $. This is a contradiction. Hence $ O_{p'}(G)=1 $.
  	
  	Assume that $ O^{p'}(G) < G $. Since  $ O^{p'}(G) $ satisfies the hypotheses of the  theorem by Lemma \ref{subgroup}, we have  that $ O^{p'}(G)\in \FF $. Thus $ G\in \FF $, a contradiction. Therefore $ O^{p'}(G) = G $.
  	
  	\vskip0.1in
  	
  	\noindent\textbf{Step 2.} Any proper subgroup of $ G $ belongs to $ \FF $.
  	
  	\vskip0.1in
  	
  	Let $ R $ be a proper subgroup of $ G $. If a Sylow $ p $-subgroup  of $ R $ is of order at most $ p $, then  $ R\in \FF $ since $ G $ is $ p $-soluble. Assume that $ p^{2} $ divides $ |R| $ and let $ L $ be a subgroup of $ R $ of order
  	$ p^{2} $. Then $ L $ satisfies the partial $ \Pi $-property in $ R $ by Lemma \ref{subgroup}, and so $ R $  satisfies  the hypotheses of the theorem. The minimal choice  of $ G $ yields that  $ R \in \FF  $. Hence  Step 2 holds.
  	
  	\vskip0.1in
  	
  	\noindent\textbf{Step 3.} There exists a maximal subgroup $ M $ of $ G $ such that $ M \in \FF $ and $ G = MG^{\FF} $. Moreover, $ G^{\FF} \cap M\leq \Phi(G) $, $ G^{\FF} $ is a $ p $-group and $ G^{\FF} /\Phi( G^{\FF})=G^{\FF} /( G^{\FF})' $ is a chief factor of $ G $.
  	
  	\vskip0.1in
  	
  	Since $ G \not \in \FF $ and $ \FF $ is saturated, it follows that $ G /\Phi(G) $ does not belong to $ \FF $.   Let $ N/\Phi(G) $ be  a minimal normal subgroup of $ G /\Phi(G)  $. Then  there exists  a maximal subgroup $ M $ of $ G $ such that $ G=MN $. Since $ G $ is $ p $-soluble, we get that  $ N<G $.   By Step 2, $ G/N $ belongs to $ \FF $.  Since $ \FF $ is a formation, it follows that $ N/\Phi(G) $ is  a unique minimal normal subgroup of  $ G /\Phi(G) $. Then $ N=G^{\FF}\Phi(G) $ and $ G = MG^{\FF} $. Note that  $ N/\Phi(G) $ is  not a $ p' $-group.  Consequently, $ N/\Phi(G) $ is an elementary abelian $ p $-group, and thus $ N\cap M=\Phi(G) $. This yields that $ G^{\FF} \cap M\leq \Phi(G) $.  By Step 1, we see that $ \Phi(G) $ is a $ p $-group. Hence $ G^{\FF}\leq N\leq O_{p}(G)=F(G) $. As a consequence, $ G=MN=MF(G) $. By  Lemma \ref{important}, we deduce  that $ G^{\FF}/\Phi(G^{\FF})=G^{\FF}/(G^{\FF})' $ is a chief factor of $ G $.


  	\vskip0.1in
  	
  	\noindent\textbf{Step 4.} Assume that $ |G^{\FF} |\geq p^{2} $ and
  	$ H $ is a subgroup of $ G^{\FF} $ of order $ p^{2} $. If  $ H\Phi(G^{\FF}) \unlhd P $,  then $ H\Phi(G^{\FF})\unlhd G $.
  	
  	\vskip0.1in
  	
  	By hypothesis, $ H $ satisfies partial $ \Pi $-property in $ G $. By Lemma \ref{pass}, $ G $ has  a chief series
  	$$ \varOmega_{G}: 1 =G^{*}_{0} < G^{*}_{1} < \cdot\cdot\cdot <G^{*}_{r-1} <G^{*}_{r}=G^{\FF} < \cdot\cdot\cdot < G^{*}_{n}= G $$
  	such that $ |G:N_{G}(HG^{*}_{i-1}\cap G^{*}_{i})| $ is a $ p $-number  for every $ G $-chief factor $ G^{*}_{i}/G^{*}_{i-1} $ $ (1\leq i\leq n) $ of $ \varOmega_{G} $.   Since $ G^{\FF} /\Phi( G^{\FF}) $ is a chief factor of $ G $ by Step 3, it follows that either $ G^{*}_{r-1}= \Phi(G^{\FF}) $ or  $ G^{*}_{r-1}\Phi(G^{\FF})=G^{\FF} $. If  $ G^{*}_{r-1}\Phi(G^{\FF})=G^{\FF} $, then $ G^{*}_{r-1}=G^{\FF} $, a contradiction. Hence $ G^{*}_{r-1}= \Phi(G^{\FF}) $. This yields that  $ G^{\FF} /\Phi( G^{\FF}) $ is a $ G $-chief factor of  $ \varOmega_{G} $. Thus  $ |G:N_{G}(H\Phi(G)\cap G^{\FF})| $ is a $ p $-number. Since $ H\Phi(G)\unlhd P $, we have that $ H\Phi(G)\unlhd G $.

  	\vskip0.1in
  	
  	\noindent\textbf{Step 5.} $ \Phi(G^{\FF}) = 1 $, $ |G^{\FF}| = p^{2} $ and $ G^{\FF} $ is a minimal normal subgroup of $ G $.
  	
  	\vskip0.1in
  	
  	Suppose that $ G^{\FF}/\Phi(G^{\FF}) $ has order $ p $. Then $ G^{\FF} $ is a cyclic $ p $-group by Step 3. Thus $ G^{\FF}\leq Z_{\FF}(G) $. By \cite[Theorem 1(iv)]{Ballester-Bolinches-1996}, we have $ Z_{\FF}(G)=\Phi(G) $, and thus $ G^{\FF}\leq \Phi(G) $. This contradicts Step 3.    Therefore $ |G^{\FF}/\Phi(G^{\FF})|\geq p^{2} $.

  	Assume that $ \Phi(G^{\FF}) >1 $.  By Lemma \ref{important}, we see that  $ \Phi(G^{\FF}) $ must be  elementary abelian. Let $ L $ be a normal subgroup of $ P $ contained in $ G^{\FF} $ such that $ |L/\Phi(G^{\FF})|=p $. Pick a  non-identity   element $ x\in L - \Phi(G^{\FF}) $, then $ o(x)=p $ or $ o(x)=p^{2} $. If $ o(x)=p^{2} $, then by Step 4, we know that $ L=\langle x \rangle \Phi(G^{\FF})\unlhd G $, a  contradiction. Hence $ o(x)=p $.  Pick an element $ y\in Z(P)\cap \Phi(G^{\FF}) $ with $  o(y)=p $.  Then $ \langle x \rangle \langle y \rangle $ has order $ p^{2} $. By Step 4, it follows that $ L=\langle x \rangle \langle y \rangle \Phi(G^{\FF})\unlhd G $, also a contradiction. This forces that   $ \Phi(G^{\FF})=1 $. Then $ G^{\FF} $ is elementary abelian. Let $ L $ be a normal subgroup of $ P $ of order $ p^{2} $ contained in $ G^{\FF} $. By Step 4, we have that $ L\unlhd G $. Notice that $ G^{\FF}/1 $ is a chief factor of $ G $, and so $ G^{\FF}=L $. Hence $ |G^{\FF}| = p^{2} $.

  	
  	\vskip0.1in
  	
  	\noindent\textbf{Step 6.}  $ {\rm Core}_{G}(M) > 1 $.
  	
  	\vskip0.1in
  	
  	Assume that $ {\rm Core}_{G}(M)=1 $. Then by Step 3 and Step 5,  $ G^{\FF} $ is the unique minimal normal subgroup of $ G $ and $ |G^{\FF}|=p^{2} $.  Since $ G $ has $ p $-length greater than $ 1 $, we deduce that $ p $ divides $ |M| $.  Choose an element $ u\in G^{\FF}\cap Z(P) $ with $ o(u)=p $. Pick  an element $ v\in M $ such that $ v\not \in G^{\FF} $ and $ o(v)=p $. There exists  an element $ g\in G $  such that $ v^{g}\in P $. Set $ w=v^{g} $. Then $\langle u \rangle \langle w \rangle$ has order $ p^{2} $. By hypothesis, $\langle u \rangle \langle w \rangle$ satisfies the partial $ \Pi $-property in $ G $. Hence $ |G:N_{G}(\langle u \rangle \langle w \rangle \cap G^{\FF})|=|G:N_{G}(\langle u \rangle)| $ is a $ p $-number. This  implies that $ \langle u \rangle \unlhd G $,  a  contradiction.
  	
  	\vskip0.1in
  	
  	\noindent\textbf{Step 7.}   The final contradiction.
  	
  	\vskip0.1in
  	
  	By Step 6, we can pick   a minimal normal subgroup $ T $ of $ G $ contained in $ {\rm Core}_{G}(M) $. Since $ G $ is $ p $-soluble, it follows from Step 1 that  $ T $ is a $ p $-group.  Clearly, $ T\cap G^{\FF}=1 $.  Choose an element $ a\in G^{\FF}\cap Z(P) $ and an element $ b\in T\cap Z(P) $ with  $ o(a)=o(b)=p $.  By hypothesis, $ \langle a , b \rangle $ satisfies the partial $ \Pi $-property in $ G $. Applying  Lemma \ref{pass}, $ G $ has  a chief series $$ \varOmega_{G}: 1 =G^{*}_{0} < G^{*}_{1} < G^{*}_{2}=G^{\FF}T <  \cdot\cdot\cdot < G^{*}_{n}= G $$
  	such that $ |G:N_{G}(\langle a, b \rangle G^{*}_{i-1}\cap G^{*}_{i})| $ is a $ p $-number  for every $ G $-chief factor $ G^{*}_{i}/G^{*}_{i-1} $ $ (1\leq i\leq n) $ of $ \varOmega_{G} $. If $ G^{\FF}=G^{*}_{1} $, then $ |G:N_{G}(\langle a, b \rangle\cap G^{\FF})|=|G:N_{G}(\langle a \rangle)| $ is a $ p $-number. Thus $ \langle a \rangle \unlhd G $, a contradiction. Therefore, $ G^{\FF}\not = G^{*}_{1} $ and $ G^{\FF}\cap  G^{*}_{1}=1 $. Moreover,   $ G^{*}_{1}/1 $ is an $ \FF $-central chief factor of $ G $.  Applying  \cite[Theorem 4.2.17 and Theorem 4.2.4]{Ballester-2006}, we  know  that $ M $ covers  the $ \FF $-central chief factor $ G^{*}_{1}/1 $, i.e.,  $ G^{*}_{1} \leq  M $.  By  Step 3, we see that $ G^{\FF}\cap M\leq \Phi(G) $ and $ G^{\FF}\nleq \Phi(G) $. Note that  $ G^{\FF} $ is a minimal normal subgroup  of $ G $. Then  $ G^{\FF}\cap M\leq \Phi(G)\cap G^{\FF}=1 $.

  	For  the chief factor $ G^{*}_{1}/1 $, we have that  $ |G:N_{G}(\langle a, b \rangle \cap G^{*}_{1})| $ is a $ p $-number, and thus $  \langle a, b \rangle \cap G^{*}_{1}\unlhd G $. Assume that  $ \langle a, b \rangle \cap G^{*}_{1}=1 $. For the chief factor $ G^{\FF}T/G^{*}_{1} $, we know  that $ |G:N_{G}(\langle a, b \rangle G^{*}_{1} \cap G^{\FF}T)|=|G:N_{G}(\langle a, b \rangle G^{*}_{1})| $ is a $ p $-number. Hence $ G^{*}_{1}< \langle a, b \rangle G^{*}_{1}\unlhd G $, and so $ \langle a, b \rangle G^{*}_{1}=G^{\FF}T $. It follows that $ G^{\FF}= G^{\FF}\cap \langle a , b \rangle G^{*}_{1}= \langle a\rangle ( G^{\FF} \cap \langle b \rangle G^{*}_{1})\leq \langle a\rangle ( G^{\FF} \cap M)  = \langle a \rangle $, contrary to Step 5.

  	Therefore, we may assume that $ \langle a, b \rangle \cap G^{*}_{1}=G^{*}_{1} $. By Step 3 and Step 5, $ G^{\FF}\cap M=1 $.  Since $ b\in T\leq M $ and $ G^{*}_{1}\leq M $, we deduce that  $ \langle b \rangle =G^{*}_{1} $. Now consider the chief factor $ G^{\FF}T/ \langle b \rangle $. Then $ |G:N_{G}(\langle a, b \rangle \langle b \rangle \cap G^{\FF}T)| $ is a $ p $-number, and so $  \langle a, b \rangle=\langle a, b \rangle \cap G^{\FF}T \unlhd G $. This  contradicts the fact that $ G^{\FF}T/ \langle b \rangle $ is a chief factor of $ G $. Our proof is now complete.
  \end{proof}

  \begin{theorem}\label{2-maximal}
  	Let $ P $ be a Sylow $ p $-subgroup of a  $ p $-soluble group $ G $.  Assume that every $ 2 $-maximal subgroup of $ P $ satisfies the partial $ \Pi $-property in $ G $.  Then the $ p $-length of $ G $ is at most $ 1 $.
  \end{theorem}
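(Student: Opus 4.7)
I would argue by minimal counterexample, paralleling the proof of Theorem~\ref{2-minimal}. Let $G$ be a counterexample of smallest order and let $\FF$ denote the saturated formation of $p$-soluble groups of $p$-length at most $1$ (see \cite[Chapter IV, Examples 3.4]{Doerk-Hawkes}). The standard reductions come first: by Lemma~\ref{over}, the quotient $G/O_{p'}(G)$ inherits the hypothesis, since any 2-maximal subgroup $H$ of $P$ is coprime to $O_{p'}(G)$ and its image in the Sylow $p$-subgroup of $G/O_{p'}(G)$ is again 2-maximal; similarly Lemma~\ref{subgroup} and the inclusion $P \leq O^{p'}(G)$ let the hypothesis pass to $O^{p'}(G)$. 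Minimality therefore gives $O_{p'}(G) = 1$ and $O^{p'}(G) = G$. Next, mirroring Step~3 of Theorem~\ref{2-minimal}, the saturation of $\FF$ yields a maximal subgroup $M$ of $G$ and a (necessarily unique) minimal normal subgroup $N/\Phi(G)$ of $G/\Phi(G)$ with $G = MN$; combined with $O_{p'}(G) = 1$ this forces $N \leq O_p(G)$, $G = MG^\FF$, $G^\FF \cap M \leq \Phi(G)$, and $G^\FF$ a $p$-group. By Lemma~\ref{important}, $G^\FF/\Phi(G^\FF) = G^\FF/(G^\FF)'$ is a chief factor of $G$ and $G^\FF$ has exponent $p$ (or at most $4$ if $p=2$).

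The heart of the argument is to pin down $G^\FF$. For any 2-maximal subgroup $H$ of $P$ that is normal in $P$, Lemma~\ref{pass} applied to a chief series through $\Phi(G^\FF)$ and $G^\FF$ gives that $|G : N_G(H\Phi(G^\FF) \cap G^\FF)|$ is a $p$-number; and Dedekind's modular law together with the $P$-invariance of $H$ and $G^\FF$ makes $(H \cap G^\FF)\Phi(G^\FF) = H\Phi(G^\FF) \cap G^\FF$ itself $P$-invariant, so that it is normal in $G$. The chief factor property pins this subgroup to either $\Phi(G^\FF)$ or $G^\FF$. A careful choice of $H$—in the generic case $P = G^\FF$ one simply takes $H$ itself to be a $P$-invariant codimension-$2$ subgroup of $G^\FF$ containing $\Phi(G^\FF)$—yields $(H \cap G^\FF)\Phi(G^\FF) \neq G^\FF$ whenever $|G^\FF/\Phi(G^\FF)| \geq p^3$, producing an intermediate $G$-normal subgroup strictly between $\Phi(G^\FF)$ and $G^\FF$, contradiction. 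Hence $|G^\FF/\Phi(G^\FF)| \leq p^2$. A short exponent-based argument imitating Step~5 of Theorem~\ref{2-minimal} (using elements of $G^\FF \setminus \Phi(G^\FF)$ of order $p$ or $p^2$ paired with central elements in $\Phi(G^\FF)$) then rules out $\Phi(G^\FF) > 1$, forcing $|G^\FF| = p^2$ with $G^\FF$ minimal normal in $G$. One finally derives the contradiction exactly as in Steps~6-7 of Theorem~\ref{2-minimal}, by exhibiting a non-trivial minimal normal $p$-subgroup $T$ inside $\mathrm{Core}_G(M)$ and constructing a 2-maximal subgroup of $P$ whose partial $\Pi$-data inside $G^\FF T$ violates the chief factor structure.

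The main obstacle is the central step: unlike the 2-minimal proof, where small subgroups of $G^\FF$ are directly hypothesised to satisfy the partial $\Pi$-property, here the hypothesis lies on index-$p^2$ subgroups of the full Sylow subgroup $P$, so one must descend across the possibly nontrivial quotient $P/G^\FF$. Extending a prescribed $P$-invariant codimension-$2$ subgroup $K$ of $G^\FF$ (containing $\Phi(G^\FF)$) to a $P$-normal subgroup $H$ of $P$ of index $p^2$ with $H \cap G^\FF = K$ amounts to finding a normal complement to $G^\FF/K$ in $P/K$, which need not exist when $P/G^\FF$ and $G^\FF/K$ are entangled. Surmounting this requires either replacing $K$ by a different $P$-invariant subgroup (possibly with image of different dimension in the $\mathbb{F}_p[P]$-module $G^\FF/\Phi(G^\FF)$) or performing a more delicate combinatorial choice of $H$ exploiting the fact that every composition factor of $G^\FF/\Phi(G^\FF)$ as a $P$-module is trivial. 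This is the principal technical challenge that distinguishes the 2-maximal proof from its 2-minimal analogue.
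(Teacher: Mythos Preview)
Your route diverges substantially from the paper's, and the obstacle you flag at the end is exactly the one the paper's argument is built to avoid.

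The paper does not mirror the long $\FF$-residual analysis of Theorem~\ref{2-minimal}. After reducing to $O_{p'}(G)=1$, it observes that the hypothesis passes to $G/N$ for any minimal normal subgroup $N$ (every $2$-maximal subgroup of $P/N$ is $H/N$ for some $2$-maximal $H$ of $P$ containing $N$, and Lemma~\ref{over} applies); minimality then forces $N=\mathrm{Soc}(G)$ and $\Phi(G)=1$, so $G=N\rtimes K$ for a core-free maximal subgroup $K$, with $P=N(K\cap P)$ and $C_G(N)=N$. The splitting is the whole point: for any $P$-invariant $N_1\leq N$ of index $p^{2}$ in $N$, the subgroup $H=N_1(K\cap P)$ is automatically $2$-maximal in $P$ with $H\cap N=N_1$, so no extension problem arises. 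The partial $\Pi$-property applied to the chief factor $N/1$ gives $N_1\unlhd G$, contradicting minimality of $N$ unless $|N|\leq p^{2}$. A $GL(2,p)$ count then yields $|P|\leq p^{3}$; and when $|P|=p^{3}$ any $P$-invariant $N_2\leq N$ of order $p$ is itself $2$-maximal in $P$, giving $N_2\unlhd G$, again a contradiction. Hence $P=N$ and the $p$-length is $1$.

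Your plan instead stays inside the $\Phi$-layer, working with $G^{\FF}$ while both $\Phi(G^{\FF})$ and $P/G^{\FF}$ may be nontrivial. As you correctly observe, there is then no guarantee that a $P$-invariant codimension-$2$ subgroup of $G^{\FF}$ extends to a $P$-normal $2$-maximal subgroup of $P$ with prescribed intersection, and the same extension difficulty recurs in your proposed analogues of Steps~6--7, where you would need $2$-maximal subgroups of $P$ meeting $G^{\FF}$ (or $G^{\FF}T$) in a specified cyclic subgroup. The paper's reduction to $\Phi(G)=1$ makes the complement $K\cap P$ available and dissolves all of these issues at once; without that reduction your sketch does not close.
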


  \begin{proof}
  	Assume that $ G $ does not have $ p $-length at most $ 1 $, and we choose $ G $ of minimum possible  order. Notice that the class of all $ p $-soluble groups of $ p $-length at most $ 1 $ is a saturated formation (see \cite[Chapter IV, Examples 3.4]{Doerk-Hawkes}). By  Lemma \ref{over}, $ G/O_{p'}(G) $ satisfies the  hypotheses of the theorem, so we can assume  that $ O_{p'}(G)=1 $. Let $ N $ be a minimal normal subgroup of $ G $.  Then $ N $ is a $ p $-group since $ G $ is $ p $-soluble. By Lemma \ref{over}, the hypotheses are inherited by $ G/N $. Hence $ G/N $ has $ p $-length of at most $ 1 $. Moreover,  $ N={\rm Soc}(G) $ and $ \Phi(G)=1 $.   Then $ G = N\rtimes K $ for some core-free maximal subgroup $ K $ of $ G $.  Clearly, $ P = N( K \cap P ) $ and $ C_{G}( N ) = N $. Assume that $ | N | \geq p^{3}  $. Let $ N_{1} $ be a normal subgroup of $ P $ contained in $ N $ such  that $ |N/N_{1}|=p^{2} $.  Then $ N_{1}(K\cap P) $ is a $ 2 $-maximal subgroup of $ P $. By hypothesis, $ N_{1}(K\cap P) $ satisfies the partial $ \Pi $-property in $ G $. Hence $ |G:N_{G}(N_{1}(K\cap P)\cap N)| =|G:N_{G}(N_{1})| $ is a $ p $-number. This implies that $ 1<N_{1}\unlhd G $, a contradiction. Therefore $ | N | \leq p^{2} $.  Obviously,  $ N $ is an elementary abelian $ p $-group of order $ p^{2} $ . Then $ K\cong G/C_{G}(N) $ is a subgroup of $ GL(2, p) $. Since $ | GL(2, p)| = (p^{2} -1)(p-1)p $, it follows that $ | P | \leq p^{3}  $. Assume that $ | P | = p^{3} $. Let $ N_{2} $ be a normal subgroup of $ P $ of order  $ p $ contained in $ N $. Then $ N_{2} $ is a $ 2 $-maximal subgroup of $ P $. With a similar argument, we can get that $ N_{2}\unlhd G $, also a contradiction. Therefore $ | P | = p^{2} $ and $ P = N $. It means that the $ p $-length of $ G $ is $ 1 $, the final contradiction.
  \end{proof}

  \begin{theorem}\label{2minimal}
  	 Let $ P $ be a Sylow $ p $-subgroup of $ G $. If  every $ 2 $-minimal subgroup of $ P $ satisfies the partial $ \Pi  $-property in $ G $, then either $ |P|=p $ or $ G $ is a $ p $-soluble group.
  \end{theorem}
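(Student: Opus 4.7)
The plan is a minimal counterexample argument, preceded by two base-case calculations that dispose of small $|P|$. If $|P| = p^{2}$, then the only $2$-minimal subgroup of $P$ is $P$ itself, so $P$ satisfies the partial $\Pi$-property in $G$ and Lemma \ref{order} yields that $G$ is $p$-soluble. If $|P| = p^{3}$, every maximal subgroup of $P$ has order $p^{2}$ and hence is a $2$-minimal subgroup of $P$; applying Theorem \ref{maximal} with $E = G$ and noting $|G|_{p} = p^{3} \ne p$ forces $G \le Z_{\UU_{p}}(G)$, so $G$ is $p$-supersoluble. From now on I may assume $|P| \ge p^{4}$ and take $G$ to be a counterexample of minimal order. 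Lemmas \ref{over} and \ref{subgroup} together with minimality of $|G|$ furnish the standard reductions $O_{p'}(G) = 1$, $O^{p'}(G) = G$, and the $p$-solubility of every proper subgroup of $G$ that contains $P$.

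Let $N$ be a minimal normal subgroup of $G$; since $O_{p'}(G) = 1$, $p$ divides $|N|$ and so $|N \cap P| \ge p$. The key structural step is to show that $|N \cap P| \le p^{2}$ with equality forcing $N$ elementary abelian. Indeed, if $|N \cap P| \ge p^{2}$ one picks a $P$-invariant subgroup $T \le N \cap P$ of order $p^{2}$; by hypothesis $T$ satisfies the partial $\Pi$-property in $G$, and Lemma \ref{pass} applied with ambient normal subgroup $N$ (whose only $G$-chief factor below it is $N/1$) forces $|G : N_{G}(T)|$ to be a $p$-number. Since $T \trianglelefteq P$ and $P$ is a Sylow $p$-subgroup of $G$, this gives $N_{G}(T) = G$, hence $T \trianglelefteq G$; minimality of $N$ then forces $T = N$. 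The remaining possibilities for $N$ are therefore: (i) $|N| = p$, forced to lie in $Z(G)$ since $O^{p'}(G) = G$ acts trivially on the $p'$-group $\mathrm{Aut}(N)$; (ii) $N$ elementary abelian of order $p^{2}$; or (iii) $N$ a non-abelian simple minimal normal subgroup with cyclic Sylow $p$-subgroup of order $p$.

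To finish, I would descend to $G/N$ (which remains non-$p$-soluble in cases (i) and (ii), since $N$ is $p$-soluble) and invoke minimality of $|G|$ after checking that $G/N$ inherits the hypothesis. For a $2$-minimal subgroup $\widetilde{H} = H/(P \cap N)$ of the Sylow $p$-subgroup $P/(P \cap N)$ of $G/N$, Lemma \ref{over} transports the partial $\Pi$-property from $G$ down to $G/N$ as soon as one exhibits a subgroup $K \le H$ with $|K| = p^{2}$ and $K \cap (P \cap N) = 1$, for then $KN/N = \widetilde{H}$. I expect the main technical obstacle to be the exceptional shapes of the preimage $H$ in which every order-$p^{2}$ subgroup contains $P \cap N$, namely $H$ cyclic of order $p^{3}$, $H$ non-abelian of order $p^{3}$, or $H \cong \mathbb{Z}_{p^{2}} \times \mathbb{Z}_{p}$ with $P \cap N = \Phi(H)$, together with the analogous configurations when $|N| = p^{2}$. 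These are neutralised by exploiting the extra structure from the previous paragraph: in case (i), the centrality of $N$ lets one adjoin to $P \cap N$ an order-$p$ element from $\Omega_{1}(Z(P)) \setminus N$ to produce the needed complement (with a separate direct analysis should $\Omega_{1}(Z(P)) = N$); case (ii) is handled via Lemma \ref{second-maximal}, where the embedding of the quotient into $GL(2,p)$ bounds $|P|$ so severely as to contradict $|P| \ge p^{4}$; and the non-abelian case (iii) is ruled out by Lemma \ref{component} applied to $F^{*}(G)$ — a component with cyclic Sylow $p$-subgroup violates its non-cyclicity conclusion once the reductions are seen to place us in its hypothesis $F(G) = \mathrm{Soc}(G)$. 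Closing these exceptional cases is where I anticipate the bulk of the technical work.
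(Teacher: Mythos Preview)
The central step of your plan—descending to $G/N$ by invoking Lemma~\ref{over}—does not work as written. Lemma~\ref{over} applies to a subgroup $K$ only when $N\le K$ or $(|K|,|N|)=1$; in your situation $K$ is a $p$-group of order $p^{2}$ with $K\cap N=1$ and $N$ is a nontrivial $p$-group, so neither hypothesis holds, and there is no reason for $KN/N$ to inherit the partial $\Pi$-property in $G/N$. Without this inheritance your induction on $|G|$ collapses. There is a second problem in case~(iii): the hypothesis $F(G)=\mathrm{Soc}(G)$ of Lemma~\ref{component} fails as soon as $\mathrm{Soc}(G)$ contains a non-abelian simple factor, so that lemma cannot be invoked to exclude a component with cyclic Sylow $p$-subgroup in the way you suggest.

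The paper's proof avoids quotients by $p$-subgroups altogether, precisely because Lemma~\ref{over} gives no purchase there. After your (correct) base cases and the reduction $O_{p'}(G)=1$, it uses Lemma~\ref{subgroup} (inheritance to \emph{subgroups}) together with Theorem~\ref{2-minimal} to show that every proper normal subgroup of $G$ of order divisible by $p^{2}$ has normal Sylow $p$-subgroup. From this it extracts a unique maximal normal subgroup $N$ with $N=\Phi(G)=O_{p}(G)=F(G)$, proves that every subgroup of $G$ of order $p$ or $p^{2}$ already lies in $N$, and then shows $N\le Z_{\infty}(G)$ by arguing that each $G$-chief factor below $N$ has order~$p$ (the order-$p^{2}$ possibility being killed by an embedding of $G/N$ into $PSL(2,p)$). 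The contradiction is closed with Huppert's criterion \cite[Kapitel~IV, Satz~5.5]{Huppert-1967}. If you wish to salvage an inductive approach via $G/N$, you would need a genuine strengthening of Lemma~\ref{over} dropping the condition $N\le H$; no such tool is available here.
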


  \begin{proof}
  	Assume  that the theorem is not ture and $ G $ is a counterexample of  minimal  order. Then $ |P|\geq p^{2} $ and $ G $ is not $ p $-soluble.  Let $ \FF $ denote the class of all $ p $-soluble groups whose $ p $-length is at most $ 1 $. Applying Lemma \ref{over}, $ G/O_{p'}(G) $  satisfies  the hypotheses of the theorem, so we can assume  that $ O_{p'}(G)=1 $. We divide the proof into the following steps.
  	
  	\vskip0.1in
  	
  	\noindent\textbf{Step 1.}  If $ K $ is a proper subgroup of $ G $ and $ p^{2} $ divides $ | K |  $, then $ K $ is $ p $-soluble. Furthermore, if $ K $ is normal in $ G $,
  	then a Sylow $ p $-subgroup of $ K $ is also normal in $ G $.
  	
  	\vskip0.1in
  	
  	Assume that  $ K $ is a proper subgroup of $ G $ such that $ p^{2} $ divides $ | K |  $, and let $ L $ be a subgroup of $ K $ of order $ p^{2}  $. Then by Lemma \ref{subgroup}, $ L $ satisfies the partial $ \Pi  $-property in $ G $. The minimal choice of $ G $ implies that $ K $ is $ p $-soluble. Suppose that $ K $ is normal in
  	$ G $. Since $ O_{p'}(K) \leq O_{p'}( G ) = 1 $, we conclude that $ O_{p}(K) $ is a Sylow $ p $-subgroup of $ K $ by Theorem  \ref{2-minimal}.
  	
  	\vskip0.1in
  	
  	\noindent\textbf{Step 2.}  $  G $ has a unique maximal normal subgroup, $ N $ say.
  	
  	\vskip0.1in
  	
  	Assume that $ G $ has two different maximal normal subgroups, $ N $ and $ T $ say. Then $ G = NT $. If the
  	Sylow $ p $-subgroups of $ N $ and $ T $ are normal in $ G $, then $ G $ has a normal Sylow $ p $-subgroup and so $ G $ is $ p $-soluble, which is  against  the choice of $ G $.  By  Step 1, we may assume that the order of the Sylow $ p $-subgroups of $ N $ is at most $ p $. If the order of the Sylow $ p $-subgroups of $ T $ is also at most $ p $, then $ |P|\leq p^{2} $. Furthermore, $ |P|= p^{2} $.  By Lemma \ref{order}, we get that $ G $ is $ p $-soluble, a contradiction. Hence, we may assume that $ p^{2} $ divides $ |T| $. Then  a Sylow $ p $-subgroup $ T_{p} $ of $ T $ is normal in $ G $ by Step 1.  In
  	that case $ T_{p}N $ is a normal subgroup of $ G $ containing $ N $. Then $ T_{p}N=G $. By Lemma \ref{order}, we may assume that $ |P|\geq p^{3} $. Then there exists a normal subgroup $ L $ of $ G $ containing $ N $ such that $ |G:L|=p $. Then $ p^{2} $ divides $ L $. By Step 1, $ L $ is $ p $-soluble. Thus $ G $ is $ p $-soluble, a contradiction.
  	
  	\vskip0.1in
  	
  	\noindent\textbf{Step 3.}  $ P\cap N $ is a  normal subgroup of $ G $ and $ N $ is $ p $-soluble. Furthermore, $ G =O^{p}(G) $.
  	
  	\vskip0.1in
  	
  	Assume that $ |P\cap N|\leq p $. Let $ H $ be a normal subgroup of
  	$ P $ of order $ p^{2} $ containing $ P\cap N $. Then $ H $ satisfies the partial $ \Pi  $-property in $ G $. By Step 2, the chief factor $ G/N $  occurs in every chief series of $ G $.  For the chief factor $ G/N $, we have that  $ |G:N_{G}(HN\cap G)| $  is a $ p $-number. It follows that $ HN\cap G\unlhd  G $. If $ HN=G $, then $ |P|=p^{2} $. By Lemma \ref{order}, $ G $ is $ p $-soluble, a contradiction. If $ HN\cap G=N $, then $ H\leq N $,  a contradiction. Therefore, we may assume that $ |P\cap N|\geq p^{2} $.  By Step 1, we conclude that $ P\cap N $ is normal in $ G $ and $ N $ is $ p $-soluble.  If $ O^{p}(G) \not = G $, then $ O^{p}(G)\leq N $ by Step 2. Thus $ O^{p}(G) $ is $ p $-soluble. Since $ G/O^{p}(G) $ is $ p $-group, we have that $ G $ is $ p $-soluble, also a contradiction. Hence $ O^{p}(G)=G $.
  	
  	\vskip0.1in
  	
  	
  	
  	
  	
  	\noindent\textbf{Step 4.} Every subgroup of $ G $ of order $ p $ or $ p^{2} $ is contained in $ N $.
  	
  	\vskip0.1in
  	
  	It is enough to show that every subgroup of $ P $ of order $ p $ or $ p^{2} $ is contained in $ N $. Let $ R $ be a subgroup of order $ p $ contained in $ N \cap Z(P) $. Assume that $ X $ is a subgroup of order $ p $ which is not contained in $ N $.  Then $ RX $ is a subgroup of $ G $ of order $ p^{2} $. By hypothesis, $ RX $ satisfies the partial $ \Pi  $-property in $ G $. For the chief factor $ G/N $, we have $ |G:N_{G}(XN\cap G)|=|G:N_{G}(XN)| $  is a $ p $-number. Thus $ G=N_{G}(XN)P $. Furthermore, $ (XN)^{G}=X^{P}N $. If $ X^{P}N=G $, then $ G/N $ is a $ p $-group, and so $ G $ is $ p $-soluble by Step 3. This is a contradiction. If $ X^{P}N=N $, then $ X\leq N $, a contradiction. Assume that $ Y $ is a subgroup of order $ p^{2} $ which is not contained in $ N $. By hypothesis, $ Y $ satisfies the partial $ \Pi  $-property in $ G $.  Then  we can handle it in a similar way.  Thus Step 4 follows.
  	
  	\vskip0.1in
  	
  	\noindent\textbf{Step 5.} $ N = \Phi(G) = O_{p}(G)=F(G) $.
  	
  	\vskip0.1in
  	
  	Since $ O_{p'}(G) = 1 $, it follows that $ \Phi( G ) $ is a $ p $-group. Suppose  that $ N $ is not contained in $ \Phi( G ) $.  There exists a maximal subgroup $ A $ of $ G $ such that $ G = NA $. By Step 4,  $ G/N $ is a $ p' $-group if $ p^{2} $ did not divide $ |A| $. Then  $ G $ is  $ p $-soluble by Step 3, a  contradiction. This implies that $ p^{2} $ divides $ |A| $. By Step 1, $ A $ is $ p $-soluble. Therefore  $ G/N $ is $ p $-soluble and so is $ G $. This contradiction yields
  	that $ N \leq \Phi( G ) $. Since $ O_{p'}(G)=1 $, we have  $ N = \Phi(G) = O_{p}(G)=F(G) $.
  	
  	\vskip0.1in

  	\vskip0.1in
  	
  	\noindent\textbf{Step 6.}  Every chief factor of $ G $ of order $ p $ is central in $ G $.
  	
  	\vskip0.1in

  	Suppose that $ B/C $ is a chief factor of $ G $ of order $ p $. Then by \cite[Chapter A, Theorem 13.8]{Doerk-Hawkes}, $ N = O_{p}(G) \leq C_{G}(B/C)\leq G $. As a consequence,  either $ C_{G}(B/C) =N $
  	or $ C_{G}(B/C) =G $.  If $ C_{G}(B/C) =N $, then $ G/N=G/C_{G}(B/C)\lesssim {\rm Aut}(B/C) $ is a $ p' $-group. Hence $ G $ is $ p $-soluble, a contradiction. Thus  $ C_{G}(B/C)=G $, i.e., $ B/C $ is central in $ G $.
  	
  	\vskip0.1in
  	
  	\noindent\textbf{Step 7.} Every chief factor of $ G $ below $ N $ does not have order $ p^{2} $.
  	
  	\vskip0.1in
  	
  	Set $ \overline{G}=G/N $.   Let $ L/K $ be a chief factor of $ G $ below $ N $ of order $ p^{2} $. Then $ L/K $ is an elementary
  	abelian $ p $-group and $ L/K $ is an irreducible and faithful $ G/C_{G}(L/K) $-module
  	over the Galois field $ GF(p) $. By \cite[Chapter A, Theorem 13.8]{Doerk-Hawkes} and Step 5, we have  $ N = O_{p}(G)=F(G) \leq C_{G}(L/K) $. Since  $ C_{G}( L/K) \not = G $, it follows that  $ N = C_{G}(L/K) $. Clearly, $ \overline{G} $ can be regarded as a subgroup of $ GL(2, p) $. Since $ \overline{G} $ is a non-abelian simple group, it follows that $ \overline{G}\leq ( GL(2, p))' \leq SL(2, p)  $  and $ \overline{G} \cap Z(SL(2, p)) = 1 $. Therefore,  $ \overline{G} $ can be regarded as a subgroup of $ PSL(2, p) $. According to \cite[Kapitel II, Hauptsatz 8.27]{Huppert-1967}, we know that either $ \overline{G}\cong PSL(2, p) $,
  	or $ \overline{G}\cong  A_{5} $, where $ p = 5 $ or $ p^{2}-1 \equiv 0 $ $ ({\rm mod} $ $ 5) $.   Suppose that  $ \overline{G} $ is  isomorphic to $ PSL(2, p) $. The simplicity  of $ \overline{G} $ implies that $ p\geq 5 $. Since  the index of $ \overline{G} $ in $ SL(2, p) $ is $ 2 $ and $ SL(2, p) $ is perfect, we conclude that $ SL(2, p)=(SL(2, p))'\leq \overline{G} $, a contradiction. If $ \overline{G}\cong A_{5}\cong PSL(2, 5) $, then we can obtain a contradiction with a similar argument. Therefore, Step 7 holds.

    \vskip0.1in
  	
  	\noindent\textbf{Step 8.}  $ N $ is contained in the hypercenter  $ Z_{\infty}(G) $ of $ G $.
  	
  	\vskip0.1in

  	By Step 6, we only need to show that every chief factor of $ G $ below $ N $ has order $ p $. Assume that there exists a chief factor $ U/V $ of $ G $ below $ N $ with $ |U/V|\geq p^{2} $. We choose $ U $ of minimal order. Then $  T\leq Z_{\UU}(G) $ for any normal subgroup $ T $ of $ G $ contained in $ N $ with $ |T|<|U| $.


  	We claim that the exponent of $ U $ is $ p $ or $ 4 $ (when $ U $  is not quaternion-free). If $ U $ is either an odd order $ p $-group or a quaternion-free $ 2 $-group, then we use $ \Omega(U) $ to denote the subgroup $ \Omega_{1} (U) $.
  	Otherwise, $ \Omega (U) = \Omega_{2} (U) $.  Let $ D $ be a Thompson critical subgroup of $ U $.  If $ \Omega(D)< U $, then $ \Omega(D) \leq Z_{\UU}(G) $ by the choice of $ U $.
  	By Lemma \ref{hypercenter}, we have that $ U\leq Z_{\UU}(G) $. This implies that $ |U/V|=p $, a  contradiction. Thus $ U = D = \Omega(D) $.  If $ U $ is a non-abelian quaternion-free $ 2 $-group, then $ U $ has a characteristic subgroup $ R $ of index $ 2 $ by Lemma \ref{charcteristic}. By the choice of $ U $, we have  that $ R \leq Z_{\UU}(G) $, and so $ U\leq Z_{\UU}(G) $, which is impossible. This means that $ U $ is a non-abelian $ 2 $-group if and only if $ U $ is not quaternion-free.  In view of  Lemma \ref{critical}, the exponent of $ U $ is $ p $ or $ 4 $ (when $ U $  is not quaternion-free), as claimed.
  	
  	 Assume that $ V>1 $. There exists  a normal subgroup $ A $ of $ P $  such that $ V< A< U $ and $ |A:V|=p $. Pick a non-identity  element $ a\in A-V $. Then $ \langle a \rangle V=A $ and  $ |\langle a \rangle|\leq p^{2} $.  Let $ L $ be a  subroup of $ A $ with order  $ p^{2} $ such that $ \langle a \rangle \leq L $. Then $ A=LV $.
  	 By hypothesis, $ L $ satisfies the partial $ \Pi  $-property in $ G $. Applying Lemma \ref{pass}, $ G $ has  a chief series
  	 $$ \varOmega_{G}: 1 =G^{*}_{0} < G^{*}_{1} < \cdot\cdot\cdot <G^{*}_{r-1} <G^{*}_{r}=U < \cdot\cdot\cdot < G^{*}_{n}= G $$
  	 such that $ |G:N_{G}(HG^{*}_{i-1}\cap G^{*}_{i})| $ is a $ p $-number  for every $ G $-chief factor $ G^{*}_{i}/G^{*}_{i-1}$ $ (1\leq i\leq n) $ of $ \varOmega_{G} $. Then  every chief factor of $ G $ below $ G^{*}_{r-1} $ has order $ p $ by the choice of $ U $. If $ G^{*}_{r-1}\not =V $, then $ U=VG^{*}_{r-1}\leq Z_{\UU}(G) $, a contradiction.  Therefore $ G^{*}_{r-1} =V $. For  the $ G $-chief factor $ U/V $, we have that $ |G:N_{G}(LV\cap U|=|G:N_{G}(A)| $ is a $ p $-number, and thus $ A\unlhd G $.  It shows that $ A/V $ is a $ G $-chief factor of order $ p $, which  contradicts the fact that $ |U/V|\geq p^{2} $. This means that every chief factor of $ G $ below $ N $ has order $ p $.
  	
  	 Assume that $ V=1 $. By Step 7, we have $ |U/V|\geq p^{3} $. Let $ Q $ be a normal subgroup of $ P $ of order $ p^{2} $ such that $ 1=V<Q<U $. Then $ Q $ satisfies the partial $ \Pi  $-property in $ G $. For the $ G $-chief factor $ U/1 $, $ |G:N_{G}(Q)| $ is a $ p $-number by Lemma \ref{pass}. Hence $ Q\unlhd G $, a contradiction.
  	

  	 \vskip0.1in
  	
  	 \noindent\textbf{Step 9.} The final contradiction.
  	
  	 \vskip0.1in

  	 Note that $ O^{p}(G)=G $ by Step 3. It follows from Step 8 and Lemma \ref{if-only} that $ N \leq Z(G) $.  According to  Step 4, every element of $ G $ of order $ p $ or $ p^{2} $ is contained in $ Z( G ) $. Applying \cite[Kapitel IV, Satz 5.5]{Huppert-1967}, we deduce that  $ G $ is $ p $-nilpotent. This final contradiction completes the proof.
  \end{proof}

  \begin{theorem}\label{classification}
  	Let $ P $ be a Sylow $ p $-subgroup of $ G $ with $ |P|\geq p^{2} $. Suppose that every $ 2 $-maximal subgroup
  	of $ P $ satisfies the partial $ \Pi  $-property in $ G $. Then one of the following holds:
  	
  	\vskip0.08in
  	
  	\noindent{\rm (1)} $ G $ is a p-soluble group.
  	
  	\noindent{\rm (2)} $ G $ is a non-$ p $-soluble group and $ |P|= p^{2} $.
  	
  	\noindent{\rm (3)} $ p = 2 $,  $ G $ is a non-$ 2 $-soluble group and $ P $ is isomorphic to $ Q_{8} $.
  \end{theorem}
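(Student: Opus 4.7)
The plan is to argue by induction on $|G|$, after disposing of the small-Sylow situations that account for cases (2) and (3). When $|P|=p^{2}$ the only $2$-maximal subgroup of $P$ is the trivial subgroup, so the hypothesis is vacuous and the conclusion reduces to case (1) or case (2). When $|P|=p^{3}$, Lemma \ref{isomorphic} yields directly that either $G$ is $p$-soluble or $p=2$ and $P\cong Q_{8}$, i.e.\ case (1) or case (3). Hence the substantive content of the theorem is the assertion that a non-$p$-soluble $G$ satisfying the hypothesis must have $|P|\leq p^{3}$; equivalently, $|P|\geq p^{4}$ together with the hypothesis forces $G$ to be $p$-soluble.

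For the inductive step, let $G$ be a counterexample of minimal order with $|P|\geq p^{4}$, so $G$ is non-$p$-soluble. Lemma \ref{over} lets the hypothesis descend to $G/O_{p'}(G)$ with the same Sylow $p$-subgroup and with $p$-unsolubility preserved, so by minimality $O_{p'}(G)=1$. Lemma \ref{subgroup} applied inside $O^{p'}(G)\supseteq P$, combined with minimality and the fact that $G/O^{p'}(G)$ is a $p'$-group, gives $O^{p'}(G)=G$. A further useful observation is that any maximal subgroup $M$ of $G$ containing $P$ inherits the hypothesis by Lemma \ref{subgroup}; minimality, combined with $|P|\geq p^{4}$ ruling out (2) and (3), then forces every such $M$ to be $p$-soluble. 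Let $N$ be a minimal normal subgroup of $G$; since $O_{p'}(G)=1$, $p$ divides $|N|$. If $N$ is a $p$-group, Lemma \ref{over} transports the hypothesis to $G/N$, because every $2$-maximal of $P/N$ is the image of a $2$-maximal of $P$ containing $N$; the inductive hypothesis then either yields $p$-solubility of $G/N$ (hence of $G$, a contradiction) or forces $|P/N|=p^{2}$ (so $N$ is itself a normal $2$-maximal of $P$ of order $|P|/p^{2}$) or $p=2$ with $P/N\cong Q_{8}$. If $N$ is non-abelian, then $N$ is a direct product of isomorphic non-abelian simple groups of order divisible by $p$, so $N$ is not $p$-soluble; since every maximal overgroup of $P$ in $G$ is $p$-soluble, we must have $NP=G$.

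The main obstacle is to close each of these residual configurations by contradiction, and the argument refines the end-game in the proof of Lemma \ref{isomorphic}. The idea is to pick a suitably placed $2$-maximal subgroup $H$ of $P$ and apply Lemma \ref{pass} to a chief series of $G$ that passes through $N$: the chief factor $N/1$ is then forced into that series, and one obtains, for carefully chosen $H$, that the intersection of $HL$ with the relevant $G$-invariant section has $p'$-normaliser-index in $G$, i.e.\ is normalised by a Hall $p'$-subgroup of $G$. Combining this with Lemma \ref{second-maximal} (which controls the structure of $G/N$ when $|P/N|=p^{2}$), with Lemma \ref{component} applied to the layer of $G$ in the non-abelian case, and with the previously established fact that every maximal overgroup of $P$ in $G$ is $p$-soluble, one contradicts either the minimality of $N$ or the non-$p$-solubility of $G$ in each sub-case. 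The delicate part is the simultaneous bookkeeping across the three residual configurations ($N$ elementary abelian with $|P/N|=p^{2}$; $N$ elementary abelian with $p=2$ and $P/N\cong Q_{8}$; $N$ non-abelian with $NP=G$), since each demands a tailored choice of $H$ and a slightly different normaliser argument.
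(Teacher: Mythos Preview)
Your inductive scaffolding is the same as the paper's: reduce to $O_{p'}(G)=1$ and $O^{p'}(G)=G$, pick a minimal normal subgroup, and pass to the quotient. But the proposal stops exactly where the substantive work begins. You correctly isolate three residual configurations and then say that each ``demands a tailored choice of $H$ and a slightly different normaliser argument'' --- that sentence describes what is left to prove, it does not prove it. In the paper, the configuration $|P/K|=p^{2}$ (its Case~2) and the configuration $P/K\cong Q_{8}$ (its Case~3) each take a page of genuine argument: one must show the minimal normal subgroup is unique, count the normal $2$-maximal subgroups of $P$, invoke the classification of $p$-groups with a unique normal subgroup of index $p^{2}$ (Lemmas~\ref{non-abelian} and~\ref{N-cyclic}), apply Tate's theorem to rule out $p'$-chief factors just above $K$, use Theorem~\ref{maximal} to force small chief factors to be cyclic, and in Case~3 carry out a hands-on elimination of a specific $|P|=16$ configuration. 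Citing Lemma~\ref{second-maximal} by itself does not close either case, and none of this detail appears in your outline.

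Your third configuration, $N$ non-abelian with $NP=G$, is a detour the paper avoids entirely. Before passing to $G/K$, the paper proves directly that every minimal normal subgroup is a $p$-group (so $\mathrm{Soc}(G)\leq P$): when $|P:P\cap K|\geq p^{2}$, extend $P\cap K$ to a $P$-normal $2$-maximal subgroup $H$, apply the partial $\Pi$-property at the chief factor $G_{j}/G_{j-1}\cong K$, and deduce $HG_{j-1}\cap G_{j}\unlhd G$; the option $HG_{j-1}\cap G_{j}=G_{j-1}$ is impossible because it would force $1<P\cap K\leq G_{j-1}\cap K=1$, so $G_{j}\leq HG_{j-1}$ and $K$ is a $p$-group. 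This single paragraph eliminates your non-abelian case outright. Your pointer to Lemma~\ref{component} here is off target: that lemma requires $F(G)=\mathrm{Soc}(G)$ and analyses components of $G$, neither of which is set up in your situation.
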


  \begin{proof}
  	By Lemma \ref{over},  we may assume  that $ O_{p'}(G) = 1 $. It is no loss to assume that $ O^{p'}(G)=G $.  For any  minimal normal subgroup $ K $ of $ G $, we  have $ P\cap K>1 $.
  	
  	Assume that  $ K $ is a minimal normal subgroup of $ G $ such that $ |P:P\cap K|\leq p $. We work to show that $ |P|=p^{2} $ under this assumption. Choose a $ 2 $-maximal subgroup $ H $ of $ P $ such that $ H< P\cap K $ and $ H\unlhd P $. By hypothesis, $ H $ satisfies the partial $ \Pi $-property in $ G $. There exists a chief series $$ \varGamma_{G}: 1 =G_{0} < G_{1} < \cdot\cdot\cdot < G_{n}= G $$ of $ G $ such that for every $ G $-chief factor $ G_{i}/G_{i-1} $ $ (1\leq i\leq n) $ of $ \varGamma_{G} $, $ | G  : N _{G} (HG_{i-1}\cap G_{i})| $ is a $ p $-number. Note
  	that there exists an integer $ j $ $ (1 \leq j \leq n) $ such that $ G_{j}=  G_{j-1} \times K $. Then  $ |G:N_{G}(HG_{j-1}\cap G_{j})| $ is a $ p $-number, and thus $ HG_{j-1}\cap G_{j}\unlhd G $. Since $ HG_{j-1}< (P\cap K)G_{j-1}  \leq  G_{j}  $, it follows that $ HG_{j-1}\cap G_{j}=G_{j-1} $, and hence $ H\cap G_{j}\leq G_{j-1} $.     If $ H>1 $, then $ 1<H=H\cap G_{j}\leq G_{j-1} $, a contradiction. Therefore $ H=1 $. This yields that  $ |P| = p^{2} $. Hence  we are in statement (1) or statement (2).
  	
  	So  in the rest of the proof, we may assume that $ | P :P\cap K |\geq p^{2} $ for every minimal normal subgroup $ K $ of $ G $.  At first, we argue that $ {\rm Soc}(G)\leq P $. Let $ H $ be a $ 2 $-maximal subgroup of $ P $ such that $ 1 < P\cap K\leq H $ and $ H\unlhd P $. By hypothesis, $ H $ satisfies the partial $ \Pi $-property in $ G $. There  exists a chief series
  	$$ \varGamma_{G}: 1 =G_{0} < G_{1} < \cdot\cdot\cdot < G_{n}= G $$ of $ G $ such that for every $ G $-chief factor $ G_{i}/G_{i-1} $ $ (1\leq i\leq n) $ of $ \varGamma_{G} $, $ | G  : N _{G} (HG_{i-1}\cap G_{i})| $ is a $ p $-number. Note
  	that there exists an integer $ j $ $ (1 \leq j \leq n) $ such that $ G_{j}=  G_{j-1} \times K $. It follows that $ | G  : N _{G} (HG_{j-1}\cap G_{j})| $ is a $ p $-number, and so $ HG_{j-1}\cap G_{j}\unlhd G $. This yields that $ HG_{j-1}\cap G_{j}=G_{j-1} $ or $ G_{j}\leq HG_{j-1} $. If $ HG_{j-1}\cap G_{j}=G_{j-1} $, then $ 1<H= H\cap G_{j}\leq G_{j-1} $, a contradiction. Therefore, we have $ G_{j}\leq HG_{j-1} $ and $ K\cong G_{j}/G_{j-1} $ is a $ p $-group. Then  every minimal normal subgroup of $ G $ is a $ p $-group,  and hence  $ {\rm Soc}(G) $ is contained in $ P $, as claimed.

  	Therefore, $ |P/K|=|P/(P\cap K)|\geq p^{2} $. In view of Lemma \ref{over}, $ G/K $ satisfies the hypotheses of the theorem. By induction, $ G /K $ belongs to one of the three statements we described in the theorem. We will complete the proof in the following three cases.

    \vskip0.1in
    \noindent \textbf{Case 1.} $ G/K $ is a $ p $-soluble group.
    \vskip0.1in
  	
  	Since $ K\leq {\rm Soc}(G)\leq P $, we have that $ G $ is $ p $-soluble. Hence $ G $ is of type (1).
  	
   \vskip0.1in
   \noindent \textbf{Case 2.} $ G/K $ is  a non-$ p $-soluble group with a Sylow $ p $-subgroup $ P/K $ of  order $ p^{2} $.
   \vskip0.1in


   In this case,  we  work to  show that $ p = 2 $ and a  Sylow $ 2 $-subgroup $ P $ of $ G $ is  isomorphic to $ Q_{8} $. Note that any minimal normal subgroup of $ G $ is a $ 2 $-maximal subgroup of $ P $.
   Suppose that there exist two distinct minimal normal subgroups $ K  $, $ N $ in $ G $. If $ P = K\times N $, then $ G $ is $ p $-soluble, a contradiction.  Hence $ |P:KN| = p $,  and so $ |P|=p^{3} $. By Lemma \ref{isomorphic}, we have that  $ p = 2 $ and $ P\cong Q_{8} $, as  wanted. Therefore, we may assume that  $ K $ is the unique  minimal normal subgroup of $ G $.  If $ R $ is a  normal $ 2 $-maximal subgroup of $ P $  which is different from $ K $, then $ R\cap K<K $ and $ K<RK $. By hypothesis, $ R $ satisfies the partial $ \Pi  $-property in $ G $.  Consider the $ G $-chief factor $ K/1 $. Then $ |G:N_{G}(R\cap K)| $ is a $ p $-number, and thus $ R\cap K\unlhd G $. The minimality of $ K $ implies that $ R\cap K=1 $. If $ RK=P $, then $ |P|=p^{4} $. By Lemma \ref{2minimal}, $ G $ is $ p $-soluble, a contradiction. If  $ RK $ is a maximal subgroup of $ P $, then $ |P|=p^{3} $. By Lemma \ref{isomorphic}, we have that  $ p = 2 $ and $ P\cong Q_{8} $, as  wanted.

   Therefore, we can assume that $ K $ is the unique normal $ 2 $-maximal subgroup of $ P $. If $ P $ is abelian, then $ P $ is cyclic by \cite[Kapitel III, Satz 8.3]{Huppert-1967}. This forces that $ |K|=p $ and $ |P|=p^{3} $. By Lemma \ref{isomorphic} again, we have that  $ p = 2 $ and $ P\cong Q_{8} $, a contradiction.  If $ P $ is non-abelian and $ p=2 $, then $ P $ is  dihedral, semidihedral or generalized quaternion by Lemma \ref{non-abelian}. Applying Lemma \ref{N-cyclic}, we deduce that $ |K|=2 $, and so $ |P|=8 $.  In view of  Lemma  \ref{isomorphic},  we see that  $ P\cong Q_{8} $, as wanted.  If $ P $ is non-abelian and $ p>2 $, then $ P'=\Phi(P)=K $ by Lemma \ref{non-abelian}.  Let  $ X/K  $ be a chief factor of $ G $. If $ X/K $ is a $ p' $-group,  then $ P\cap X=K=\Phi(P) $. It follows from \cite[Kapitel IV, Satz 4.7]{Huppert-1967} that $ X $ is $ p $-nilpotent. Since  $ O_{p'}(G)=1 $ , we have that  $ X=K $, a contradiction.  Hence $ O_{p'}(G/K)=1 $ and  $ p $  divides the order of $ X/K $. If $ |X/K|_{p}=p $, then every maximal subgroup of $ P\cap X $ satisfies the partial $ \Pi $-property in $ G $. By Theorem \ref{maximal}, we know that $ X\leq Z_{\UU_{p}}(G) $. Therefore  $ X/K $ is a $ G $-chief factor of order $ p $.  By Lemma \ref{second-maximal}, we get  that $ {\rm Soc}(G/K)=X/K\times Y/K $, where $ Y/K $ is a simple group with cyclic Sylow $ p $-subgroups of order $ p $.  By Theorem  \ref{maximal}, we see that $ Y\leq Z_{\UU_{p}}(G) $, and thus  $ |Y/K|=p $. This implies that  $ G $ is $ p $-soluble, a contradiction.  If $ |X/K|_{p}=p^{2} $, then $ X=G $ since $ O^{p'}(G)=G $. Therefore, $ G/K $ is a simple group whose Sylow  $ p $-subgroups have order $ p^{2} $.  Suppose that $ K $ is the unique $ 2 $-maximal subgroup of $ P $. Then $ P $ is cyclic by \cite[Kapitel III, Satz 8.3]{Huppert-1967}, which contradicts the assumption that $ P $ is non-abelian. Hence $ P $ has  a $ 2 $-maximal  subgroup $ U $  which is  diffrent  from $ K $.  Then $ K<UK\leq P $. Note that $ K $ is the unique  minimal normal subgroup of $ G $.  Since $ K $  satisfies the partial $ \Pi  $-property in $ G $,  we have that  $ |G:N_{G}(UK\cap G)|=|G:N_{G}(UK)| $ is a $ p $-number. If $ UK $ is a maximal subgroup of $ P $, then $ UK\unlhd G $, and hence $ UK=G $,  a contradiction. If $ UK=P $, then $ P\unlhd G $, also a contradiction.

   \vskip0.1in
   \noindent \textbf{Case 3.} $ p=2 $,  $ G/K $  is a non-$ 2 $-soluble group and $ P/K $ is isomorphic to $ Q_{8} $.
   \vskip0.1in

   In this case,  we work to obtain a contradiction. Clearly, $ |{\rm Soc}(G)/K|<|P/K|=8 $. If $ |{\rm Soc}(G)/K|=4 $, then $ P/{\rm Soc}(G) $ is a Sylow $ 2 $-subgroup of $ G/{\rm Soc}(G) $ of order $ 2 $. By \cite[Corollary 5.14]{isaacs2008finite}, $ G/{\rm Soc}(G) $ is $ 2 $-nilpotent, and thus $ G $ is $ 2 $-soluble, a contradiction. Therefore, $ |{\rm Soc}(G)/K|\leq 2 $.

   Suppose that $ |{\rm Soc}(G)/K|=2 $. Let $ T $ be  a minimal normal subgroup  of $ G $ of order $ 2 $ such that $ {\rm Soc}(G) = K \times T $.  Note that $ G/T $ is not $ 2 $-soluble. Since $ |P|=8|K| $, it follows that $ |P/T|=4|K|\geq 8 $. In view of  Lemma \ref{over}, every $ 2 $-maximal subgroup  of $ P/T $ satisfies the partial $ \Pi  $-property in $ G/T $.  By induction, we have that $ P/T\cong Q_{8} $.  This yields that $ |P| = 16 $, $ |{\rm Soc}(G)| = 4 $ and every minimal normal subgroup of $ G $ has order $ 2 $. We claim that such a group $ P $ does not exist. Clearly, $ K\times T\leq Z(G) $.  If $ L $ is the third subgroup of order $ 2 $ in $ K\times T $, then $ L $ is also a minimal normal subgroup of $ G $. With a similar argument as above, we can get that $ G/L $ is not $ 2 $-soluble  and  $ P/L\cong Q_{8} $. Suppose that $ K\leq \Phi(P) $. Then $ K\times T=\Phi(P) = Z(P) $,  and hence $ P $ can be generated by two elements.  Set  $ P = \langle x_{1}, x_{2} \rangle  $. Observe  that $ P_{1} = \langle x_{1} \rangle\Phi (P) $ and $ P_{2} = \langle x_{2} \rangle \Phi (P) $ are two maximal subgroups of $ P $ and $ P = P_{1} P_{2} = \langle x_{1} \rangle \langle x_{2} \rangle   $. Write $ y = [x_{1} , x_{2} ]  $. Note that $ P $  has nilpotence class at most 2 and $ P'\leq Z(P) $.  Consequently,  $ P'=\langle y \rangle < \Phi(P) $. Clearly, $ P'\not =K $ and $ P'\not =T $.  Hence $ P' $ is the third minimal normal subgroup of $ G $ contained in $ K\times T=Z(G) $. Then  $ G/P' $ is not $ 2 $-soluble and $ P/P'\cong Q_{8} $. This is a contradiction. Therefore,  we may suppose that  $ K \nleq  \Phi(P) $. Then there exists a maximal subgroup $ P_{3} $ of $ P $ such that $ P = K\times P_{3} $, so $ P_{3}\cong Q_{8} $. As a consequence, $ P/Z(P_{3}) $ is an elementary abelian group of order $ 8 $. Since $ |Z(P)|=4 $, we have $ K\times Z(P_{3})=Z(P)=K\times T\leq Z(G) $. Hence we can  assume that $ T=Z(P_{3}) $. Thus $ P/Z(P_{3})=P/T $ is isomorphic to $ Q_{8} $, a contradiction.

   Therefore, $ {\rm Soc}(G)=K $ is the unique minimal normal subgroup of $ G $. Let $ G/A $ be a chief factor of $ G $. Then $ K\leq A $.  Since $ O^{2'}(G)=G $,  we have that $ 2 $ divides $ |G/A| $. If $ |G/A|_{2}=2 $, then by  \cite[Corollary 5.14]{isaacs2008finite}, a Sylow $2$-subgroup of $ A/K $ must be isomorphic to an elementary abelian  group of order $ 4 $. This is impossible because  $ P/K\cong Q_{8} $.  If $ |G/A|_{2}=8 $, then $ G/A $ is not a simple group by a Theorem of Brauer and Suzuki (see \cite[Chapter XII, Theorem 7.1]{Feit}). This is a contradiction. Hence we may assume that $ |G/A|_{2}=4 $. Pick a  $ 2 $-$ G $-chief factor $ B/C $ over $ K $ and below $ A $. Then $ |B/C|=2 $. This implies that $ A $ is $ 2 $-soluble. Assume that  $ D $ is a maximal normal subgroup of $ G $ which is different from $ A $. With a similar argument, we can get that  $ |G/D|_{2}=4 $ and $ D $ is $ 2 $-soluble. It follows that $ G=AD $  is $ 2 $-soluble, a contradiction. Therefore, $ A $ is the unique maximal normal subgroup of $ G $. Then the chief factor $ G/A $ occurs in every chief series of $ G $. Clearly, $ P\cap A $ is a $ 2 $-maximal subgroup of $ P $. If $ P\cap A $ is a unique  $ 2 $-maximal subgroup of $ P $, then $ P $ is  cyclic, a contradiction. Hence there exists a  $ 2 $-maximal subgroup $ M $ of $ P $ which is different  from $ P\cap A $. By hypothesis,  $ M $ satisfies the partial $ \Pi  $-property in $ G $. Then $ |G:N_{G}(MA\cap G)|=|G:N_{G}(MA)| $ is a $ 2 $-number. If $ M(P\cap A)=P $, then $ MA=G $. This means that $ |G/A| $ has order $ 4 $, and hence $ G $ is $ 2 $-soluble,  a contradiction. If $ M(P\cap A) $ is a maximal subgroup of $ P $, then $ MA=M(P\cap A)A $ is normalized by $ P $, and so $ MA $ is a normal subgroup of $ G $. Thus $ G=MA $.  This is impossible. Our proof is now complete.
   \end{proof}

   \begin{proof}[Proof of Theorem \ref{2-mini}]
   	Combining Theorem  \ref{2-minimal} and Theorem \ref{2minimal}, we conclude that $ G $ is  $ p $-soluble with  $ p $-length  at most $ 1 $.
   \end{proof}

   \begin{proof}[Proof of Theorem \ref{2-maxi}]
   	By Theorem  \ref{2-maximal} and Theorem \ref{classification}, the conclusion follows.
   \end{proof}

\section*{Acknowledgments}

    This work is supported by the National Natural Science Foundation of China (Grant No.12071376, 11971391),  the  Fundamental Research Funds for the Central Universities (No. XDJK2020B052),  the Natural Science Foundation Project of CQ (No.cstc2021jcyj-msxmX0426) and the  Fundamental Research Funds for the Central Universities (Nos.XDJK2019C116 and XDJK2019B030).


    \small


\end{document}